\newtheorem*{lemma}{Lemma}
\newtheorem*{prop}{Proposition}
\newtheorem*{thm}{Theorem}
\newtheorem*{cor}{Corollary}
\newcommand{\iso}{\overset{\sim}{\rightarrow}}
\newcommand{\twoheaddownarrow}{\overset{\sim}{\twoheaddownarrow}}
\newcommand{\nc}{\newcommand}
\nc{\Ker}{\operatorname{Ker}} \nc{\rker}{\operatorname{rKer}}
\nc{\im}{\operatorname{Im}}
\nc{\stab}{\operatorname {Stab}}
\nc{\ann}{\operatorname {Ann}}
\nc{\Id}{\operatorname {Id}}
\nc{\Prim}{\operatorname {Prim}}
\nc{\Real}{\operatorname {Re}}
\nc{\Ext}{\operatorname {Ext}}
\nc{\rad}{\operatorname {rad}}
\begin{document}

\title[Convexity for $S$-graphs]{Convexity Properties of the Canonical $S$-graphs}

%\title[The $B(\infty)$ Crystal]{Dual Kashiwara Functions for the $B(\infty)$ Crstal}
\author[Anthony Joseph]{Anthony Joseph}

\date{\today}
%\footnote{Work supported in part by the Binational Science Foundation, Grant no. 711628}
\maketitle

\vspace{-.9cm}\begin{center}
Donald Frey Professional Chair\\
Department of Mathematics\\
The Weizmann Institute of Science\\
Rehovot, 76100, Israel\\
anthony.joseph@weizmann.ac.il\footnote{Work supported in part by the Binational Science Foundation, Grant no. 711628}
\end{center}\

Key Words: Convexity, $S$-graphs, Crystals.
\medskip

AMS Classification: 17B35

\

\textbf{Abstract}.  Let $n$ be a positive integer and set $N=\{1,2,\ldots,n\}$. Let $\{c_k\}_{k \in N}$ be non-negative integers.  A convex set $(c_k')_{k \in N} \subset \mathbb Q^n$, given by a family of linear relations in the $\{c_k\}_{k \in N}$ and depending on their natural order, is defined. The extremal points of this convex set is shown to be the
$S$-set constructed in \cite {J1}. A main application of this result is towards \cite {J2} a precise description of Kashiwara $B(\infty)$ crystal.

\section{Introduction}\label{1}

We shall assume the base field to be the set of rational numbers $\mathbb Q$.  We could equally well replace $\mathbb Q$ by the real field $\mathbb R$.

\subsection{$S$-graphs}\label{1.1}

In \cite [6.7]{J1} $S$-graphs were introduced to understand the structure of the Kashiwara $B(\infty)$ crystal for an arbitrary Kac-Moody Lie algebra.  In this it was noted that $B(\infty)$ must have a polyhedral structure if ``dual Kashiwara functions'' associated to a given simple root, exist and are linear. These functions are not intrinsically canonical; but their maximal values are, and moreover the latter determine the required polyhedral structure.

When the Weyl group $W$ is finite then a result \cite [Thm. 3.9]{BZ} of Berenstein-Zelevinsky shows that one may compute dual Kashiwara functions though the tropical calculus.  This method breaks down in general.  The role of $S$-graphs was to give a procedure which should always work.

Even when the tropical calculus can be applied it gives (surprisingly) far too many functions (in the sense that only their maxima are to be used).  This ambiguity is absent from the $S$-graph method in so far as it is applicable.  Thus even when $W$ is finite, one should show that the tropical calculus gives the integral points of a convex set whose extremal elements are those obtained from $S$-graphs.

The main result of the present work describes a convexity property (Thm. \ref {1.4}) associated with $S$-graphs aimed at ultimately establishing the property alluded to in the previous paragraph.  It is yet one more property of these remarkable graphs.  We also give in Section \ref {2} a new method to read off a function from a tableau and in Section \ref {4} a method to reconstruct the tableau.

\subsection{Functions}\label{1.2}

Fix a positive integer $n$ and set $N:=\{1,2,\ldots,n\}, \hat{N}=\{1,2,\ldots, n+1\}$.  An $S$-graph $\mathscr G$ (of order $n$) has a finite vertex set $V(\mathscr G)$ labelled by elements of $\hat{N}$.  For each $k\in \hat{N}$, let $V^k(\mathscr G)$ denote the set of all vertices of $\mathscr G$ with label $k$. Then the crucial ``$S$'' property of an $S$-graph is for all $v\in V(\mathscr G)$ and all $k \in \hat{N}$, there exist $v' \in V^k(\mathscr G)$ and an ordered path from $v$ to $v'$.

The above property would lead to an immediate contradiction had we used arrows on edges to define ordering.  Instead we assign to each edge of $\mathscr G$ a non-negative integer $c_k: k \in N$ and require that the values of these integers increase along an ordered path \cite [6.7]{J1}.  It would seem to us that graphs with this $S$ property should be of wide interest.

A further important property of an $S$-graph is that it admits ``evaluation'' \cite [6.2]{J1}.  This means that to each $v \in V(\mathscr G)$ there is a linear function $z_v$ such that for each pair $v \in V^k(\mathscr G), v' \in V^\ell(\mathscr G)$ joined by an edge with label $c_r$ one has
$$z_v-z_{v'}=c_r(r^k-r^\ell),\eqno {(1)}$$
where the $r^i:i \in \hat{N}$ are linear functions on an $n+1$ dimensional vector space over $\mathbb Q$ arising from the Kashiwara functions on the $B(\infty)$ crystal (\ref {3.3}).

One may remark that an $S$-graph may admit cycles, so this condition is not trivially satisfied.

Here the $x_k:=r^k-r^{k+1}$ are viewed as co-ordinate functions on an $n$ dimensional vector space over $\mathbb Q$.

In addition to the above there is a distinguished vertex $v \in V(\mathscr G)$ to which we assign the ``driving function''.  Here for the most part it will be taken to be the zero function.  Assume further that $\mathscr G$ is connected. Then the set $z_v: v \in V(\mathscr G )$ of linear functions is determined by $(1)$ and the driving function. Such a set is called an $S$-set.  The $r^j: j \in \hat{N}$ above are identified as Kashiwara functions (\ref {3.3}) for a particular choice of simple root.  Then the $S$ property gives, via a sum rule - hence the epithet $S$, the maxima of the resulting ``dual Kashiwara functions'' a required invariance property \cite [6.7]{J1} needed for describing $B(\infty)$ as a polyhedral subset of $\mathbb N^\infty$.

%The set of functions obtained from an $S$-graph is called an $S$-set.

%The existence of such graphs for all $n \in \mathbb N^+$ is not obvious.  Indeed the reader might first wish to reflect how such graphs might be constructed.

\subsection{Order Relations}\label{1.3}

 The structure of an $S$ graph of order $n$ should of course depend on the natural order relation on the set $\{c_k\}_{k \in N}$.  Fix a linear order (also known as a total order) on $N$.  Since there is a danger that this order relation may be confused with the natural order on $N$ we note it by $\prec$.  (This precaution was not taken in \cite {J1}).

  The relation $\prec $ induces a linear order on $\{c_k\}_{k \in N}$. To simplify notation we use $\textbf{c}$ to denote this set equipped with $\prec$ which we shall always assume lifts the natural order on $\{c_k\}_{k \in N}$. In other words $i\prec j$ implies $c_i\leq c_j$.  We further let $<$ denote the natural linear order on $N$.

 Even imposing some further natural conditions (\cite [$P_1-P_7$]{J1} some of which are the above) an $S$-graph is not uniquely determined by $\textbf{c}$.  However it is shown in \cite {JZ} that there is a $S$-graph $\mathscr G(\textbf{c})$, uniquely determined by $\textbf{c}$, appearing as a subgraph of the graph $\mathscr G^{n+1}$ of links of equivalence classes $H^{n+1}$ of unordered partitions into $(n+1)$ parts satisfying certain boundary conditions.  It is these $S$-graphs which we consider.  It is also shown in \cite [5.8]{JL} that $Z(\textbf{c}):=\{z_v:v \in \mathscr G(\textbf{c})\}$ is independent of the lifting of the natural order on $\{c_k\}_{k \in N}$.

 \subsection{Convexity}\label{1.4}

  The canonical $S$-graphs $\mathscr G(\textbf{c})$ are constructed by a process which we call binary fusion \cite [7.2]{J1}.  We review this in Section \ref {3.3}.  For the moment we fix $s_i:i \in N$ such that $s_1\prec s_2 \prec \ldots \prec s_n$.  Then we relabel the set $N_k:=\{s_i\}_{i=1}^k$ as $\{t_i\}_{i=1}^k$, so that $t_1 < t_2 < \ldots < t_k$.

 In the above conventions, set $K(\textbf{c}):=(c_k')_{k \in N} \subset \mathbb Q^n$ defined by
 $$0\leq c_k' \leq c_k,  \eqno {(2)}$$
 %$$c_{k+1}'-c_k' \geq \min\{0,c_{k+1}-c_k\}, \forall k \in N\setminus \{n\}. \eqno {(3)}$$
 $$c_{t_{i+1}}'-c_{t_i}' \geq \min\{0,c_{t_{i+1}}-c_{t_i}\}, \forall i=1,2,\ldots,k-1, \eqno {(3)_k}$$
 for all $k \in N$.
 It is a convex set, Lemma \ref {3.6}.  Let $E(K(\textbf{c}))$ denote the extremal points of $K(\textbf{c})$.  The main result of this work is the following

 \begin {thm} $E(K(\textbf{c}))= Z(\textbf{c})$.
 \end {thm}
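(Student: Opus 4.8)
The plan is to prove the two inclusions $E(K(\textbf{c})) \subseteq Z(\textbf{c})$ and $Z(\textbf{c}) \subseteq E(K(\textbf{c}))$ separately, exploiting the recursive structure coming from binary fusion. First I would carefully unwind the definition of $K(\textbf{c})$: the defining inequalities $(2)$ and $(3)_k$ are a nested family indexed by $k \in N$, where the $k$-th level involves only the coordinates indexed by $N_k = \{s_1,\ldots,s_k\}$ reordered into natural order as $t_1 < \cdots < t_k$. The key structural observation I would push is that $K(\textbf{c})$ is built up one coordinate at a time as $k$ runs from $1$ to $n$, mirroring exactly the binary fusion construction of $\mathscr G(\textbf{c})$. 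So my working hypothesis is that the convex polytope $K(\textbf{c})$ and the finite set $Z(\textbf{c})$ satisfy \emph{the same} recursion, and the theorem should follow by induction on $n$.

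\medskip

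For the inclusion $Z(\textbf{c}) \subseteq E(K(\textbf{c}))$, I would first verify that each $z_v$ for $v \in \mathscr G(\textbf{c})$, written in the coordinates $x_k = r^k - r^{k+1}$, actually lies in $K(\textbf{c})$, i.e. satisfies $(2)$ and $(3)_k$. This should be a direct consequence of the evaluation rule $(1)$ together with the non-negativity and ordering of the $c_k$: the differences $z_v - z_{v'} = c_r(r^k - r^\ell)$ along edges translate into precisely the linear relations $(3)_k$, and the $S$-property guarantees one can reach, from any vertex, the extreme configurations where a given coordinate is clamped to $0$ or to $c_k$. To see these points are \emph{extremal} I would argue that each $z_v$ saturates enough of the defining inequalities to be a vertex: at an $S$-set point, for each index either the lower bound $c_k'=0$, the upper bound $c_k' = c_k$, or one of the difference relations $(3)_k$ holds with equality, and counting these active constraints one finds $n$ linearly independent ones, forcing extremality.

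\medskip

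For the reverse inclusion $E(K(\textbf{c})) \subseteq Z(\textbf{c})$, I would take an arbitrary extremal point $(c_k')$ and show it must coincide with some $z_v$. The natural tool is that an extremal point of a polytope defined by $(2)$–$(3)_k$ is determined by a maximal set of tight constraints. I would analyze, level by level in $k$, which of the three types of constraint is active at each coordinate, and show that the resulting combinatorial data is exactly the data recorded by a vertex of $\mathscr G(\textbf{c})$ under binary fusion. Here I expect to lean on the reconstruction procedure promised in Section~\ref{2} and Section~\ref{4}, which reads a tableau (hence a vertex) off from a linear function and back; matching the tight-constraint pattern of an extremal point to a tableau is the crux.

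\medskip

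The main obstacle, I anticipate, is precisely this last matching step: controlling the combinatorics of which constraints are simultaneously active at an extremal point and proving that the admissible patterns are in bijection with $V(\mathscr G(\textbf{c}))$. The subtlety is that $(3)_k$ uses $\min\{0, c_{t_{i+1}} - c_{t_i}\}$, so the relevant bound switches form depending on the relative sizes of the $c$'s; keeping the reordering $t_1 < \cdots < t_k$ consistent across successive values of $k$ while tracking binary fusion is delicate. I would manage this by strong induction on $n$, removing the $\prec$-largest index $s_n$ to relate $K(\textbf{c})$ to the lower-order polytope and $\mathscr G(\textbf{c})$ to a fused pair of smaller $S$-graphs, and checking that extremal points and $S$-set points restrict and extend compatibly under this reduction.
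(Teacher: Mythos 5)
Your proposal has the right skeleton — both inclusions, induction on $n$ driven by binary fusion, and the preliminary verification that $Z(\textbf{c})\subseteq K(\textbf{c})$ (which is indeed what the paper does in Lemma \ref{2.3} and Proposition \ref{3.2}, though it is not as ``direct'' as you suggest: one needs the fact that $\mathscr T\in H(\textbf{c})$ forces the tableau order $P(\mathscr T)$ to be compatible with $\prec$, which is what makes the inequalities $(6),(7)$ come out). However, both of the genuinely hard steps are left unexecuted, and the strategies you name for them are not the ones that make the theorem tractable. For the inclusion $E(K(\textbf{c}))\subseteq Z(\textbf{c})$ you propose to classify the tight-constraint patterns at an extremal point and biject them with vertices of $\mathscr G(\textbf{c})$; you yourself flag this as ``the crux'' and do not carry it out, so as it stands this direction is simply missing. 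The paper never analyzes tight constraints at all: it proves the stronger statement that $K(\textbf{c})$ equals the convex hull of $Z(\textbf{c})$ (Proposition \ref{3.5} together with Lemma \ref{3.6}), by induction on $n$. The inductive step is an explicit computation of how functions transform under fusion — the relabelling $\theta$, the isomorphism $\varphi$, and crucially the shift formula $z_{\varphi(v)}=\theta(z_v)+(c_s-c_{s+1})\delta_s$ of $(14)$ — culminating in the sandwich inequality $(18)$, $c_{s-1}'\leq c_s'\leq c_{s+1}'+c_s-c_{s+1}$, which shows that the $s$-th coordinate of any point of $K(\textbf{c})$ can be realized by interpolating between the two lifts $(12)$ and $(17)$ of a lower-order convex combination. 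Once $K(\textbf{c})=\operatorname{conv}(Z(\textbf{c}))$ is known, extremal points of the hull of a finite set automatically lie in that set, and no combinatorial matching is needed. Your plan contains no analogue of $(14)$ or $(18)$, and without them the claimed ``same recursion'' for $K(\textbf{c})$ is an assertion, not a proof.

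The second gap is in your extremality argument for $Z(\textbf{c})\subseteq E(K(\textbf{c}))$: you assert that each $z_v$ saturates $n$ linearly independent constraints, but this is exactly the kind of claim that requires the explicit coefficients $c_k^{\mathscr T}$ (Lemma \ref{2.2}) and a careful degeneracy analysis when the $c_i$ are not generic (equal or vanishing $c_i$ collapse constraints and can merge vertices), and you give no such verification. The paper avoids constraint counting entirely by a separating-evaluation argument: Lemma \ref{3.3} produces, for any chosen $z\in Z(\textbf{c})$, a point $b\in B$ at which $z$ strictly dominates every other element of $Z(\textbf{c})$ — again proved by induction along binary fusion, using the freedom to place $r^s(b)$ — and Corollary \ref{3.4} then shows $z$ cannot be a proper convex combination of the others. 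So while your overall decomposition is sound in outline, the two devices that actually close the argument (the fusion shift formula with its sandwich inequality, and the separating functional) are absent, and the substitutes you propose are either unproved or substantially harder than what they replace.
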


 \subsection{}\label{1.5}

 Recall the above notation.  Take $k \in N$. Set $K=\{1,2,\ldots,k\}$. Then $s_k=t_j$ for some $j \in K$.  By construction $c_{s_k}=c_{t_j} \geq c_{t_{j+1}},c_{t_{j-1}}$.  It follows that $(3)_k$ implies the relations
 $$c_{t_{j+1}}'-c_{t_j}' \geq -(c_{t_{j}}-c_{t_{j+1}}), \quad c_{t_{j}}'-c_{t_{j-1}}' \geq 0. \eqno{(3)'_k}$$

  In what follows relation $(3)$ (resp. $(3)'$ means the combination of $(3)_k:k \in N$ (resp. $(3)'_k:k \in N$).

  \begin {lemma}   $(3)$ and $(3)'$ are equivalent.
  \end {lemma}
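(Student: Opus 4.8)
The implication $(3)\Rightarrow(3)'$ is essentially already recorded in Section \ref{1.5}: for each $k$, writing $s_k=t_j$, the inequalities $c_{s_k}=c_{t_j}\ge c_{t_{j-1}},c_{t_{j+1}}$ noted there force $\min\{0,c_{t_{j+1}}-c_{t_j}\}=-(c_{t_j}-c_{t_{j+1}})$ and $\min\{0,c_{t_j}-c_{t_{j-1}}\}=0$, so that the two relations comprising $(3)'_k$ are \emph{literally} two of the relations comprising $(3)_k$. Hence the whole content of the lemma is the reverse implication $(3)'\Rightarrow(3)$, and my plan is to prove it by induction on $k$, establishing $(3)_k$ for every $k\in N$ from the family $(3)'$.

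The base case $k=1$ is vacuous, since $(3)_1$ imposes no relation. For the inductive step I would exploit the fact that $N_{k-1}$ is obtained from $N_k$ by deleting the single element $s_k=t_j$. Sorting both sets by the natural order $<$, deletion of $t_j$ affects only the consecutive pairs adjacent to it: every consecutive pair of $N_k$ lying entirely among $\{t_1,\ldots,t_{j-1}\}$ or entirely among $\{t_{j+1},\ldots,t_k\}$ is again a consecutive pair of $N_{k-1}$ involving the same two elements of $N$, whereas the two pairs $(t_{j-1},t_j)$ and $(t_j,t_{j+1})$ of $N_k$ are replaced in $N_{k-1}$ by the single bridging pair $(t_{j-1},t_{j+1})$. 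Since the constants $c_m$ and the coordinates $c_m'$ attached to a given $m\in N$ do not depend on the level, the relation of $(3)_k$ for each pair not involving $t_j$ coincides with the corresponding relation of $(3)_{k-1}$ and so holds by the induction hypothesis. The only two relations of $(3)_k$ left to verify are those for the pairs $(t_{j-1},t_j)$ and $(t_j,t_{j+1})$, and these are exactly $(3)'_k$, which holds by assumption. This yields $(3)_k$ and closes the induction.

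The difficulties here are purely ones of bookkeeping rather than of analysis. First, one must treat the boundary cases $j=1$ and $j=k$, where one of the two relations of $(3)'_k$ (and the corresponding adjacent pair) is absent; in each such case the bridging pair degenerates as well, so the correspondence of consecutive pairs between levels $k$ and $k-1$ stays exact. Second, the identification of the two ``new'' relations of $(3)_k$ with $(3)'_k$ rests squarely on the maximality $c_{s_k}=c_{t_j}\ge c_{t_{j\pm1}}$ from Section \ref{1.5}, which is precisely what collapses the two minima to the explicit right-hand sides of $(3)'_k$; without it the inserted relations would still appear but would no longer coincide with $(3)'_k$. I expect the main (though modest) obstacle to be organising this pair-correspondence cleanly enough that the inductive reduction to level $k-1$ is transparent.
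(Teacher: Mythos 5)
Your proof is correct and is essentially the paper's own argument: both rest on the same dichotomy, namely that a nearest-neighbour pair in $N_k$ involving the $\prec$-maximal element $s_k=t_j$ yields (via the maximality $c_{t_j}\geq c_{t_{j\pm 1}}$) exactly a relation of $(3)'_k$, while a pair not involving $t_j$ remains a nearest-neighbour pair in $N_{k-1}$ and carries the identical inequality, so induction on the level applies. The only difference is cosmetic — the paper phrases this as a descent for a fixed pair, you as an upward induction on $k$ — and your version also usefully spells out the boundary cases $j=1$, $j=k$ that the paper leaves implicit.
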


  \begin {proof}  It remains to prove that $(3)'$ implies $(3)$.

  Suppose that for some $k \in N$ that $t>t'$ are nearest neighbours in $N_k$, with respect to $<$.  If either $t$ or $t'$ is the unique maximal element of $N_k$, with respect to $\prec$, then the condition on $c_t'-c'_{t'}$ given by $(3)_k$ is the same as that given by $(3)'_k$. Otherwise $t,t' \in N_{k-1}$ and are nearest neighbours in $N_{k-1}$.  Then the proof proceeds by the obvious induction.

  \end {proof}

 \textbf{Remark 1.}  If the coefficients $c_k$ are increasing in $k$, then $(3)_n$ just asserts that $c'_k$ are increasing in $k$.  In this case $(3)_n$ implies $(3)_k$, for all $k \in N$.  However this is generally false even for $n=3$ as was pointed out to me by S. Zelikson through the following example.  Suppose $1 \prec 3 \prec 2$.  Then $(3)_3$ give $c'_2-c_1' \geq 0, c'_3-c'_2 \geq -(c_2-c_3)$, which together imply $c'_3-c'_1 \geq -(c_2-c_3)$.  Yet $(3)_2$ gives the relation $c'_3-c'_1 \geq 0$, which is generally stronger.  In this case the above set of relations could have been summarized as
 $$c_{r}'-c_s' \geq \min\{0,c_{r}-c_s\}, \forall r,s \in N|r>s. \eqno {(3)''}$$

 \

\textbf{ Remark 2.}  Obviously $(3)''$ implies $(3)$ but the converse can be false even for $n=3$.
   Thus suppose $2 \prec 1 \prec 3$.  Then $(3)_3$ give $c'_2-c_1' \geq -(c_1-c_2), c'_3-c'_2 \geq 0$, which imply $(3)_2$.  Together they imply $c'_3-c_1' \geq -(c_1-c_2)$, which is weaker than we would have obtained from $c_{r}'-c_s' \geq \min\{0,c_{r}-c_s\}$, taking $r=3,s=1$, namely $c_3'-c'_1 \geq 0$.

 %In what follows relation $(3)$ (resp. $(3)'$ means the combination of $(3)_k:k \in N$ (resp. $(3)'_k:k \in N$).  Of course $(3)$ and $(3)'$ are equivalent.
 \subsection{}\label{1.6}

 The main results of this paper are applied to the description of the dual Kashiwara functions for the Kashiwara crystal $B(\infty)$ in \cite {J2}, for example in \cite [Lemmas 7.2.7, 8.5]{J2}.

 \

 \textbf{Acknowledgements.}  I should like to thank S. Zelikson for pointing out to me that $(3)_n$ was insufficient to describe the set $K(\textbf{c})$ satisfying Theorem \ref {1.4}.  His understanding of the results in \cite {BZ} also helped to inspire this work.

 \section{Tableaux}\label{2}

 This section should be skipped on a first reading as it requires some knowledge of the results in \cite {J1} which we shall not recall in great detail. Then a fairly detailed knowledge of \cite {J1} will be required.

\subsection{}\label{2.1}

Fix $n \in \mathbb N$. Let us first review the main properties of the set $H^{n+1}$ (or simply, $H$) of equivalence classes of diagrams having $n+1$ columns (and satisfying certain boundary conditions) given in \cite [2.1]{J1}.  The boundary conditions are basic to the structure of the sets of dual Kashiwara functions and in turn are mainly responsible for the lack of inversion symmetry $k \mapsto n+1-k$ in Equation $(3)$.

Let $\mathscr G_{H^{n+1}}$ (or simply, $\mathscr G^{n+1}$) be the graph whose vertices are the elements $H^{n+1}$ and whose edges are the single block linkages \cite [3.3,6.1]{J1}.

Recall:

 There is a ``complete'' diagram of minimal height \cite [2.3.7]{J1} in each equivalence class.

  To each diagram we associated a tableau $\mathscr T$ defined \cite [3.1]{J1} by inserting the coefficients $c_k: k \in N$ into blocks in a unique fashion (so then the notion of diagram and tableau become interchangeable). Through this one associates to each tableau $\mathscr T$ a function $f_\mathscr T$.  It is independent of the choice of $\mathscr T$ in its equivalence class \cite [4.5]{J1}. Conversely the map $\mathscr T \mapsto f_\mathscr T$, separates these classes \cite [4.7]{J1}.  When the fixed ``driving function'' is set equal to zero, $f_\mathscr T$ takes the form
  $$f_\mathscr T=\sum_{i\in N}c_i^{\mathscr T}(r^i-r^{i+1}), \eqno {(4)}$$
  where the $c_i^{\mathscr T}$ are linear combinations with integer coefficients of the $\{c_i\}_{i\in N}$.

  For each choice of $\textbf{c}$ there is a subset $H^{n+1}(\textbf{c})$, or simply $H(\textbf{c})$, of $H^{n+1}$ of tableaux determined by rules given in \cite [3.2]{J1}.   The subgraph of $\mathscr G^{n+1}$ whose vertices lie in $H(\textbf{c})$ is denoted by $\mathscr G(\textbf{c})$.

  \

 \textbf{ Notation.}  When $\mathscr T \in H(\textbf{c})$  defines a vertex $v$ of $\mathscr G(\textbf{c})$ we shall denote $f_\mathscr T$ by $z_v$.

 \

  Suppose that $\mathscr T$ is a complete tableau.  It was shown in \cite [5.2]{J1} how to compute $f_\mathscr T$ from the heights of its columns.  Here we give a new formula which is perhaps better and in any case allows us to show that $K(\textbf{c})\supset \{z_v\}_{v \in V(\mathscr G(\textbf{c}))}$.  Conversely (Section \ref {4}) it eventually allows one to reconstruct a tableau from a function.

\subsection{}\label{2.2}

The main result of this section is to show that how to obtain the coefficients $c_i^\mathscr T$ from $\mathscr T \in H^{n+1}$.  In view of $(4)$ this allows one to read off the function $f_\mathscr T$ from $\mathscr T$.  It provides a useful alternative to the method described in \cite [Sect. 4]{J1}.

  The set of columns of a tableau $\mathscr T \in H^{n+1}$ is denoted by $\{C_i\}_{i\in \hat{N}}$. The height of a column $C$ is denoted by $ht(C)$. The height function of $\mathscr T$ is the map $i\mapsto ht(C_i)$. Set $ht(\mathscr T)=\max \{ht (C_i)\}_{i\in \hat{N}}$.

  Let $\mathscr T$ be a complete tableau. In this case the height function of $\mathscr T$ takes a rather special form \cite [2.3.3]{J1}.  Again the numbering the blocks of $\mathscr T$ is also rather special \cite [3.1]{J1}. Indeed a block lying in an odd (resp. even) row of the $k^{th}$ column $C_k$ has entry $c_k$ (resp. $c_{k-1}$), except if $k=n+1$ (resp. $k=1$), in which case (if it exists) it is called an extremal block \cite [3.1]{J1} and is not given a numerical entry.

  In a row of odd (resp. even) height $m$ the contribution to $f_\mathscr T$ from $C_k$ (resp. $C_{k+1}$) is $c_k(r^k-r^{j})$ (resp. $c_k(r^{k+1}-r^{j})$), where $j>k$ (resp. $j<k+1$) is minimal (resp. maximal) such that $C_j$ has height $\geq m$.  The boundary conditions ensure the existence of such a $j$ for a complete tableau \cite [Lemma 2.3.2]{J1}.  Again this formula does not need there to be a numerical entry in an extremal block.

  Recall \cite [2.2]{J1} that distinct columns $C,C'$ of $\mathscr T$ of height $\geq s$ are said to be neighbouring at level $s$ if every column of $\mathscr T$ between $C,C'$ has height $<s$.

  The left (resp. right) indicator $\iota^-_{C}$ (resp. $\iota^+_{C}$) of a column $C$ is set equal to $0$ if there is no column of height $\geq ht(C)$ to the left (resp. right) of $C$ and set equal to $1$ otherwise.

  Set $c_{n+1}=0$.  Given integers $m\leq n$, set $[m,n]:=\{m,m+1,\ldots,n\}$.

\begin {lemma}  Assume $k \in [0,n]$. Let $C_{k+1}$ have height $m \in \mathbb N$.  Then

\

(i) Suppose $m$ is odd.  If there exists $j \in [1,k]$ such that $C_j$ and $C_{k+1}$ are neighbouring at level $m$, then $c_{k+1}^{\mathscr T}-c^{\mathscr T}_k=c_{k+1}-c_j$.  Otherwise  $c_{k+1}^{\mathscr T}-c^{\mathscr T}_k=c_{k+1}$.  Expressed less pedantically $c_{k+1}^{\mathscr T}-c^{\mathscr T}_k=c_{k+1}-\iota^-_{C_{k+1}}c_j$.

%(i)   If $m$ is odd, then $c_{k+1}^{\mathscr T}-c^{\mathscr T}_k=c_{k+1}-\iota^-_{C_{k+1}}c_j$, where $j\leq k$ is maximal such that $C_j$ has height $\geq m$ (so then $C_j$ and $C_{k+1}$ are neighbouring at level $m$).
%Moreover $v \in V(\mathscr G(\textbf{c}))$ implies that $\iota^-_{C_{k+1}}c_j \leq c_k$.  In particular $c_{k+1}^v-c^v_k\geq c_{k+1}-c_k$.

\

(ii)  Suppose $m$ is even and strictly positive.  If there exists $j\in [k+1,n]$ such that $C_{k+1}$ and $C_{j+1}$ are neighbouring at level $m$, then $c_{k+1}^{\mathscr T}-c^{\mathscr T}_k=c_{k+1}-c_j$. Otherwise $c_{k+1}^{\mathscr T}-c^{\mathscr T}_k=c_{k+1}$.  Expressed less pedantically $c_{k+1}^{\mathscr T}-c^{\mathscr T}_k=c_{k+1}-\iota^+_{C_{k+1}}c_j$.

%then $c_{k+1}^{\mathscr T}-c^{\mathscr T}_k=c_{k+1}-\iota^+_{C_{k+1}}c_j$, where $j\geq k+1$ is minimal such that $C_{j+1}$ has height $\geq m$ (so then $C_{k+1}$ and $C_{j+1}$ are neighbouring at level $m$).
%Moreover $v \in V(\mathscr G(\textbf{c}))$ implies that $\iota^+_{C_{k+1}}c_j \leq c_{k+1}$.  In particular $c_{k+1}^v-c^v_k\geq 0$.

\

(iii) If $m=0$, then $c_{k+1}^{\mathscr T}-c^{\mathscr T}_k=0$.

%\
%
%(iv)  In all cases $c_{k+1}^v-c^v_k\geq \min \{0,c_{k+1}-c_k\}$.

%(iv)  In all cases $c_{k+1}^v-c_k^v\geq \min\{0,c_{k+1}-c_{k}\}$.

%\
%
%
% (v)
% $$c_n^v=\left\{
%                       \begin{array}{ll}
%                         c_n, & \hbox{if $C_{n+1}$ has odd height,} \\
%                         0, & \hbox{otherwise.}
%                       \end{array}
%                     \right. $$

 \end {lemma}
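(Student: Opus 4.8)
The plan is to read off $c_{k+1}^{\mathscr T}-c_k^{\mathscr T}$ as the coefficient of $r^{k+1}$ in $f_{\mathscr T}$. Writing $f_{\mathscr T}=\sum_{i\in N}c_i^{\mathscr T}(r^i-r^{i+1})$ and collecting terms, the coefficient of $r^{k+1}$ is $c_{k+1}^{\mathscr T}-c_k^{\mathscr T}$ under the boundary conventions $c_0^{\mathscr T}=c_{n+1}^{\mathscr T}=0$ (compatible with $c_{n+1}=0$). So it suffices to compute this coefficient from the row-by-row formula of \ref{2.2}.

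First I would classify, level by level, the rows whose contribution involves $r^{k+1}$. At a row of height $s$ the monomial $r^{k+1}$ occurs with a positive sign exactly from $C_{k+1}$ itself when it is present, i.e. when $s\le m$: in an odd row this gives $+c_{k+1}$ and in an even row $+c_k$, the extremal cases $k+1=n+1$ and $k=0$ being absorbed by $c_{n+1}=0$ and $c_0=0$. It occurs with a negative sign exactly when $C_{k+1}$ is the target $C_j$ of a neighbouring source: at an odd level from the left neighbour $C_{L(s)}$ of $C_{k+1}$, contributing $-c_{L(s)}$, and at an even level from the right neighbour $C_{R(s)}$, whose even entry is $c_{R(s)-1}$, contributing $-c_{R(s)-1}$. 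Since at levels $s>m$ the column $C_{k+1}$ is neither present nor a target, the coefficient of $r^{k+1}$ equals
$$c_{k+1}^{\mathscr T}-c_k^{\mathscr T}=\sum_{\substack{1\le s\le m\\ s\ \mathrm{odd}}}(c_{k+1}-c_{L(s)})+\sum_{\substack{1\le s\le m\\ s\ \mathrm{even}}}(c_k-c_{R(s)-1}),$$
with $c_{L(s)}=0$ (resp. $c_{R(s)-1}=0$) when the relevant neighbour fails to exist.

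I would then evaluate this sum by induction on $m$, peeling off the top level. The cases $m=0$ (yielding $0$, part (iii)) and $m=1$ (yielding $c_{k+1}-c_{L(1)}$, part (i)) are immediate. For the inductive step I need a single structural input, furnished by the special form of the height function of a complete tableau \cite[2.3.3]{J1}: adjacent columns differ in height by at most one, whence $ht(C_k)\ge m-1$ and $ht(C_{k+2})\ge m-1$. This forces the neighbours of $C_{k+1}$ at level $m-1$ to be the adjacent columns, $L(m-1)=k$ and $R(m-1)=k+2$. Consequently, for $m$ even the top term $c_k-c_{R(m)-1}$ is preceded by the inductive value $c_{k+1}-c_{L(m-1)}=c_{k+1}-c_k$, the two combining to $c_{k+1}-c_{R(m)-1}=c_{k+1}-c_j$; and for $m$ odd the top term $c_{k+1}-c_{L(m)}$ is preceded by $c_{k+1}-c_{R(m-1)-1}=c_{k+1}-c_{k+1}=0$, leaving $c_{k+1}-c_{L(m)}=c_{k+1}-c_j$. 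Reading $C_{L(m)}$ as the left neighbour at level $m$ in case (i) and $C_{R(m)}=C_{j+1}$ as the right neighbour at level $m$ in case (ii), and encoding the non-existence of a neighbour through $\iota^-_{C_{k+1}}$ and $\iota^+_{C_{k+1}}$, gives precisely the stated formulas.

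The main obstacle I anticipate is the telescoping, specifically the claim that the lower levels contribute nothing beyond the single adjacent-column correction. This is exactly where the rigidity of the complete-tableau height function is indispensable: without the bound $|ht(C_i)-ht(C_{i+1})|\le 1$ the intermediate neighbours $L(s),R(s)$ can jump over short columns, the pairwise cancellation fails, and the coefficient of $r^{k+1}$ acquires spurious terms (as one sees on the shape $(3,1,3,2,3)$ for $n=4$, where the coefficient of $r^1$ becomes $2c_1-c_2$ rather than $c_1$). I would therefore concentrate the care on verifying the neighbour identities $L(m-1)=k$ and $R(m-1)=k+2$ together with their boundary degenerations ($k=0$, $k+1=n+1$, and empty intermediate columns), since these underlie every cancellation in the induction.
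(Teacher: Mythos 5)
Your reduction is correct as far as it goes: the coefficient of $r^{k+1}$ in $f_{\mathscr T}$ is indeed $c_{k+1}^{\mathscr T}-c_k^{\mathscr T}$, and your signed sum $\sum_{s\ \mathrm{odd}}(c_{k+1}-c_{L(s)})+\sum_{s\ \mathrm{even}}(c_k-c_{R(s)-1})$ is the same row-by-row bookkeeping, via \cite[4.4]{J1}, that the paper performs by comparing the contributions to $r^k-r^{k+1}$ and $r^{k+1}-r^{k+2}$; both arguments then telescope using the rigidity of the height function of a complete tableau. So in outline you and the paper are doing the same thing.

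The genuine gap is your ``single structural input'': complete tableaux do \emph{not} satisfy $|ht(C_i)-ht(C_{i+1})|\le 1$. The paper's own Example 1 of \ref{4.2} is a complete tableau with column heights $(3,2,1,3)$, where $ht(C_3)=1$ and $ht(C_4)=3$. What \cite[2.3.3]{J1} actually provides, and what the paper's proof invokes, is parity-asymmetric: if $ht(C_{k+1})=m$ is odd then $ht(C_k)\ge m-2$ and $ht(C_{k+2})\ge m-1$, while if $m$ is even then $ht(C_k)\ge m-1$ and $ht(C_{k+2})\ge m-2$. Consequently your claim that \emph{both} $L(m-1)=k$ and $R(m-1)=k+2$ hold is false in general: in the example above, with $k+1=4$ and $m=3$, one has $L(2)=2\neq k$. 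Your induction happens to survive because each step uses only the parity-appropriate half of the claim ($R(m-1)=k+2$ when $m$ is odd, $L(m-1)=k$ when $m$ is even), and unwinding the induction one needs exactly $ht(C_{k+2})\ge m-1$ and $ht(C_k)\ge m-2$ for $m$ odd (resp. $ht(C_k)\ge m-1$ and $ht(C_{k+2})\ge m-2$ for $m$ even), which are precisely the true asymmetric bounds. So the skeleton is salvageable, but as written the step ``adjacent columns differ in height by at most one, whence $L(m-1)=k$ and $R(m-1)=k+2$'' would fail if you tried to prove it; it must be replaced by the correct citation of \cite[2.3.3]{J1}. A secondary point: since removing the top block of $C_{k+1}$ need not leave a complete tableau, your induction should be formulated as an evaluation of the partial sums over levels $s\le m'$ within the fixed tableau $\mathscr T$ (using the bounds above at each level), not as an application of the lemma itself to a smaller tableau.
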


 \begin {proof}  We shall compute $f_\mathscr T$ using \cite [4.4]{J1}.

 %and equate with the right hand side of $(18)$.  In this the driving function denoted by $h$ (resp. $z$) here (resp. in \cite {J3}) can be ignored as it is common to both expressions.  We do not need the complicated expressions given in \cite [5.2]{J3}. In this sense the computation is novel.

 Strictly speaking the proof below is only valid when $k>0$.
  A few extra computations are needed for the case $k=0$.
   %and these use \cite [Lemma 2.3.2]{J3} to determine the height of $C_1$.  Alternatively
   Alternatively it may be validated for the case $k=0$ by adjoining an extra empty column $C_0$ to $\mathscr T$ on the left.  Notably the new tableau $\mathscr T_0$ so obtained still satisfies the boundary conditions \cite [2.2]{J1}. The rules \cite [4.4]{J1} for computing $f_{\mathscr T_0}$ show that the result is independent of $c_0$ which can be set equal to zero. Again since the first column is empty, there are no contributions to $r^0-r^1$ in $f_{\mathscr T_0}$ and so $c_0^\mathscr T=0$.  Finally $\mathscr T_0$ may be completed.  This just amounts to adjoining a suitable number of dominoes to the zeroth column \cite [2.3.1]{J1}.

   \

 For (i), suppose $m$ is odd.

 \

  Consider the contribution to $r^k-r^{k+1}$ (resp. $r^{k+1}-r^{k+2}$) from the rows of odd height $m+2s:s \in \mathbb Z$.  If $s \geq 1$ these contributions come from a column $C_\ell$ of height $ \geq m+2s$ with $\ell$ maximal such $1\leq \ell\leq k$.  Thus these contributions are the same.  If $s=0$, this contribution is $\iota^-_{C_{k+1}}c_j$ (resp. $c_{k+1}$) with $j$ defined as in (i) of the lemma.  On the other hand the height function of a complete tableau \cite [2.3.3]{J1} implies that $C_k$ has height at least $m-2$.  Thus the contribution for $s\leq -1$ is $c_k$ (resp. $c_{k+1}$).   Thus the overall contribution to $c_{k+1}^\mathscr T-c_k^\mathscr T$ coming from rows of odd height is just $c_{k+1}-\iota^-_{C_{k+1}}c_j + \frac{m-1}{2}(c_{k+1}-c_k)$.

 Consider the contribution to $r^k-r^{k+1}$ (resp. $r^{k+1}-r^{k+2}$) from the rows of even height $m+2s-1:s \in \mathbb Z$.  If $s \geq 1$, these contributions come from a column $C_\ell$ of height $ \geq m+2s-1$ with $\ell$ minimal such $n+1 \geq  \ell\geq k+2$.  Thus these contributions are the same. On the other hand the height function of a complete tableau \cite [2.3.3]{J1} implies that $C_{k+2}$ has height at least $m-1$.  Then for $s \leq 0$ the contribution is $-c_k$ (resp. $-c_{k+1}$).  Thus the overall contribution to $c_{k+1}^\mathscr T-c_k^\mathscr T$ coming from rows of even height is just $- \frac{m-1}{2}(c_{k+1}-c_k)$.  Combined with the expression in the previous paragraph, this gives the first part of (i).

 %Finally suppose that $\iota^-_{C_{k+1}}=1$, equivalently that the column $C_j$ is defined.  Then its entry in its $m^{th}$ row is just $c_j$. On the other hand the entry in the $m-1$ row of $C_{k+1}$ is $c_k$. By the rules given in  \cite [3.2.1$(**)$,5.3]{J3} the condition that $v \in V(\mathscr G(\textbf{c}))$ implies that $c_j \leq c_k$.  Hence the last two parts of (i).

 \

 For (ii) suppose that $m$ is even. The argument is similar so we can be a little briefer.

 \

  Consider the contribution to $r^{k}-r^{k+1}$ (resp. $r^{k+1}-r^{k+2}$) from the rows of odd height $m+2s-1:s \in \mathbb Z$.  If $s \geq 1$ these contributions are the same.  On the other hand the height function of a complete tableau \cite [2.3.3]{J1} implies that $C_k$ has height at least $m-1$.  Thus the contribution for $s\leq 0$ is $c_k$ (resp. $c_{k+1}$).   Thus the overall contribution to $c_{k+1}^\mathscr T-c_k^\mathscr T$ coming from rows of odd height is just $\frac{m}{2}(c_{k+1}-c_k)$.

 Consider the contribution to $r^{k}-r^{k+1}$ (resp. $r^{k+1}-r^{k+2}$) from the rows of even height $m+2s:s \in \mathbb Z$.  If $s \geq 1$, these contributions are the same.  If $s=0$, this contribution is $-c_k$ coming from the column $C_{k+1}$ (resp. $-\iota^+_{C_{k+1}}c_j$ coming from the column $C_{j+1}$). On the other hand the height function of a complete tableau \cite [2.3.3]{J1} implies that $C_{k+2}$ has height at least $m-2$.  Then for $s \leq -1$ the contribution is $-c_k$ (resp. $-c_{k+1}$).  Thus the overall contribution to $c_{k+1}^\mathscr T-c_k^\mathscr T$ coming from rows of even height is just $-\iota^+_{C_{k+1}}c_j+c_k- \frac{m-2}{2}(c_{k+1}-c_k)$.  Together with the expression in the previous paragraph gives the first part of (ii).

 \

 %Finally suppose that $\iota^+_{C_{k+1}}=1$, equivalently that the column $C_{j+1}$ is defined.  Then its entry in the $m^{th}$ row is just $c_j$. On the other hand the entry in the $m-1$ row of $C_{k+1}$ is $c_{k+1}$. By the rules given in  \cite [3.2.1$(*)$,5.3]{J3} the condition that $v \in V(\mathscr G(\textbf{c}))$ implies that $c_j \leq c_{k+1}$.  Hence the last two parts of (ii).

If $m=0$, the only thing that changes in the proof of (ii) is that there is zero contribution to $c_{k+1}^\mathscr T-c_k^\mathscr T$ when $s=0$. Hence (iii).

%(iv) follows from (i)-(iii).

%\textbf{Remark}.  Since by definition of $f_\mathscr T$ one has $c^\mathscr T_{n+1}=0$, this result enables one to compute the $\{c_i^\mathscr T\}_{i=1}^n$ from the height function of $\mathscr T$.

%(v) can be checked directly but this is a little tricky since one must use that $\mathscr T$ is a complete tableau. It can also be read off from \cite [5.2,$(*)-(***)$]{J3}, taking account of \cite [Lemma 5.1]{J3} which implies that there are no cancellations between these expressions. Below we give the details. In this $i_j,j_{k_\ell}$ refer to the symbols used in \cite {J3}.  There is a slight clash of notation with that used here ($n$ appears in different roles).
%
% Let $u$ be the height of $\mathscr T$.  Suppose that $u$ is odd. Then $C_{n+1}$ has height $u$, since $\mathscr T$ is a complete tableau \cite [Lemma 2.3.2(ii)]{J3}.  Then the required contribution comes from \cite [5.2,$(***)$]{J3}, noting that $i_{m-1}<i_m=n+1$. Suppose that $u$ is even.  Then $C_{n+1}$ has height $\geq u-1$, since $\mathscr T$ is a complete tableau \cite [Lemma 2.3.2(i)]{J3}.  If equality holds then the required contribution comes from \cite [5.2,$(**)$]{J3}, noting that we can take $j_{k_n}<j_{k_{n+1}}=n+1$.  If the inequality holds then there is no contribution in view of \cite [5.2,$(*)$]{J3}, noting that $j_{k_{n+1}}<n+1$.

 \end {proof}

 \subsection{}\label{2.3}

 Let $\mathscr T$ be a tableau. In \cite [3.2.1]{J1}, we defined a partial order on $P(\mathscr T)$ on $N$.  It is \textit{not} independent of the choice of $\mathscr T$ in its equivalence class, except for the subset of equivalent complete tableaux \cite [Lemma 5.3]{J1}.

  %It has the property \cite [Lemma 5.7]{J1} that when $P(\mathscr T)$ lifts the order on $N$ defined by the natural order on the coefficients, then
 %$$0\leq c_i^\mathscr T \leq c_i, \forall i \in N.\eqno {(4)}$$

%In other words Equation $(1)$ is satisfied with $c_i'=c_i^\mathscr T$.

Now recall that we have used $\textbf{c}$ to define the set of coefficients combined with a linear order $\prec$ on $N$ lifting the natural order on the $\{c_k\}_{k \in N}$. Following \cite [Definition 5.3]{J1} we let $H(\textbf{c})$ denote the subset of $H$ of equivalence classes of tableaux such that $P(\mathscr T)$, with $\mathscr T$ complete, lifts to the given linear order $\prec$ on $N$.  Let $\mathscr G(\textbf{c})$ be the corresponding subgraph of $\mathscr G$.

This means that $\mathscr G(\textbf{c})$ depends on the partial order on the $\{c_k\}_{k \in N}$. It also depends on the lifting; but this can be avoided by identifying vertices corresponding to the same function.  The details are worked out in \cite [5.8]{JL}.  One may remark that because $P(\mathscr T)$ is only a partial order means that some of the graphs $\mathscr G(\textbf{c})$, for the different $n!$ orderings, may coincide.  The total number of such graphs is the Catalan number $C(n)$, \cite [6.7]{JL}.

Now suppose $\mathscr T \in H(\textbf{c})$.  Then by \cite [Lemma 5.7]{J1} we obtain
$$0\leq c_i^\mathscr T \leq c_i, \forall i \in N.\eqno {(5)}$$

Recall \cite [3.2.1]{J1} that $P(\mathscr T)$ can be described as follows. For simplicity we shall assume that $\mathscr T$ is complete which is all we need here.  This assumption simplifies the numbering in the blocks.

Assume $m$ odd.

 Let $C_j,C_{k+1}:j\leq k$ be columns of a complete tableau $\mathscr T$ of height $\geq m$ which are neighbouring at level $m$.  This means that the columns strictly between them have height $<m$.  Let $C_{j_1+1},C_{j_2+1},\ldots,C_{j_{s-1}+1}$ be the columns of $\mathscr T$ between $C_j,C_{k+1}:j\leq k$ having height $m-1$ and set $j_s=k$.  The entries in their $(m-1)$ row are respectively $c_{j_t}:t=1,2,\ldots,s$.  We remark that since $\mathscr T$ is complete, then if $s>1$ one has  $j_1=j$ by \cite [Lemma 2.3.3 (iii)]{J1}.  Then by \cite [3.2.1$(**)$]{J1} the relations $j\prec j_t:t=1,2,\ldots,s$ (and in particular $j\prec k$) belong to $P(\mathscr T)$.

Now fix a complete tableau $\mathscr T$ and let $\{C_i\}_{i \in \hat{N}}$ be its set of columns.  Suppose that $\iota^-_{C_{k+1}}=1$.  This means that there exists $j:1\leq j \leq k$ such that $C_j$ and $C_{k+1}$ are neighbouring at level $m$.  Then by the above $j\prec k$ belongs to $P(\mathscr T)$. Thus if $\mathscr T \in H(\textbf{c})$, we obtain $c_j \leq c_k$ and so from the conclusion of Lemma \ref {2.2}(i), that
$$c_{k+1}^{\mathscr T}-c^{\mathscr T}_k\geq c_{k+1}-c_k. \eqno {(6)}$$

Since $c_k \geq 0$, this again holds if $\iota^-_{C_{k+1}}=0$.

Suppose that $m$ is even.

Let $C_{k+1},C_{j+1}:j>k$ be columns of a complete tableau $\mathscr T$ of height $m$ and neighbouring at level $m$.  Then similar to the case when $m$ is odd (or by duality \cite [2.3.8]{J1}) we may deduce from \cite [3.2.1$(*)$]{J1} that the relation $j \prec k+1$ belongs to $P(\mathscr T)$.  Thus if $\mathscr T \in H(\textbf{c})$ we obtain $c_{j}\leq c_{k+1}$. Since $c_{k+1}\geq 0$, from the conclusion of Lemma \ref {2.2}(ii),(iii), we obtain
$$c_{k+1}^{\mathscr T}-c^{\mathscr T}_k\geq 0. \eqno {(7)}$$

%Combining $(6),(7)$ we obtain
%$$c_{k+1}^{\mathscr T}-c^{\mathscr T}_k\geq c_{k+1}-c_k. \eqno {(6)}$$

Comparison of $(5),(6),(7)$ above with $(2),(3)$ gives the
\begin {lemma}  Suppose $\mathscr T \in H(\textbf{c})$, then $f_\mathscr T=(c^\mathscr T_k)_{k \in N} $, satisfy $(2),(3)_n$ with $c_k'=c_k^\mathscr T: k \in N$.
\end {lemma}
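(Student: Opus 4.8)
The plan is to read off both assertions directly from the relations $(5)$, $(6)$, $(7)$ established just above, after two small reductions. First, since $f_{\mathscr T}$, and hence the tuple $(c_k^{\mathscr T})_{k\in N}$, depends only on the equivalence class of $\mathscr T$ by \cite[4.5]{J1}, I would replace $\mathscr T$ by the complete tableau in its class; this is exactly the setting in which Lemma \ref{2.2} and the order-relation analysis producing $(5)$, $(6)$, $(7)$ were carried out, so those three relations are available. Second, I would note that the index $k=n$ is special in the definition of $(3)_k$: since $N_n=N$, the reordering $t_1<t_2<\cdots<t_n$ is simply $t_i=i$, so that $(3)_n$ reduces to the consecutive-index inequalities
$$c_{i+1}'-c_i' \geq \min\{0,\,c_{i+1}-c_i\}, \qquad i=1,2,\ldots,n-1,$$
which is precisely the form that $(6)$ and $(7)$ control.

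For the bound $(2)$ there is nothing to do: with $c_k'=c_k^{\mathscr T}$ it is literally the statement of $(5)$. For $(3)_n$, I would fix $k\in\{1,\ldots,n-1\}$ and split into cases according to the parity of the height $m$ of the column $C_{k+1}$. When $m$ is odd, $(6)$ gives $c_{k+1}^{\mathscr T}-c_k^{\mathscr T}\geq c_{k+1}-c_k\geq \min\{0,c_{k+1}-c_k\}$. When $m$ is even, including $m=0$ via Lemma \ref{2.2}(iii), $(7)$ gives $c_{k+1}^{\mathscr T}-c_k^{\mathscr T}\geq 0\geq \min\{0,c_{k+1}-c_k\}$, since a minimum taken with $0$ is never positive. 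In either case the single inequality $c_{k+1}^{\mathscr T}-c_k^{\mathscr T}\geq \min\{0,c_{k+1}-c_k\}$ holds, and this is the $i=k$ instance of $(3)_n$; letting $k$ range over $\{1,\ldots,n-1\}$ finishes the verification.

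Because $(5)$, $(6)$, $(7)$ already carry all the content, I do not expect any substantive obstacle here: the only points requiring care are the reduction to a complete representative, so that the earlier computation of the coefficients $c_k^{\mathscr T}$ applies verbatim, and the elementary observation that both $c_{k+1}-c_k$ and $0$ dominate $\min\{0,c_{k+1}-c_k\}$, which is what merges the two parity cases into the single bound of $(3)_n$. It is worth emphasizing that the dependence on the order $\prec$ enters only through the membership $\mathscr T\in H(\textbf{c})$, which is what converted the neighbouring-column relations of Lemma \ref{2.2} into the inequalities $(6)$ and $(7)$ in \ref{2.3}; the present statement merely repackages them against the definition of $K(\textbf{c})$.
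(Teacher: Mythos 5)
Your proof is correct and follows essentially the same route as the paper, which simply observes that comparison of $(5)$, $(6)$, $(7)$ with $(2)$, $(3)_n$ gives the lemma: $(5)$ is $(2)$, and the parity split (odd height via $(6)$, even height including $m=0$ via $(7)$) yields the consecutive-index inequalities of $(3)_n$ since both $c_{k+1}-c_k$ and $0$ dominate $\min\{0,c_{k+1}-c_k\}$. Your explicit reduction to a complete representative via \cite[4.5]{J1} just makes precise what the paper leaves implicit in \ref{2.3}.
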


\section{The Consequences of Binary Fusion}\label{3}

\subsection{}\label{3.1}

By \cite [Thm. 8.5]{J1} one may construct $\mathscr G(\textbf{c})$ by a method quite different to that described in \ref {2}.  We called this binary fusion \cite [7.2]{J1}.  It implies for example that $|H^{n+1}(\textbf{c})|=2^n$, which in general is quite impossible to prove directly, though the reader is invited to try.  (Lamprou checked this for $n=4$ in all $4!$ cases (and also that $|H_{n+1}|=42$) through a long and tedious computation.  This was a main inspiration for binary fusion.)  Binary fusion is also needed to show that $\mathscr G(\textbf{c})$ is an $S$-graph \cite [Sect. 7]{J1}.  It seems quite impossible to prove this from our previous construction; but again the reader is invited to try.

We recall briefly how binary fusion works.

Recall that $\textbf{c}$ designates the set of coefficients $\{c_k\}_{k \in N}$ with a fixed linear order $\prec$ on $N$ lifting the natural partial order on these coefficients.  Let $s$ denote the unique maximal element of $N$ with respect to $\prec$.  Recall this means that $c_s \geq c_k$, for all $k \in N$.

   Set $\textbf{c}^-=\textbf{c}\setminus c_s$. Assume that the labelled graph $\mathscr G(\textbf{c}^-)$ has been constructed, where the index set $N\setminus \{s\}$ (resp. $\hat{N}\setminus \{s\}$ is viewed as $\{1,2,\ldots,n-1\}$ (resp. $\{1,2,\ldots,n\}$) by closing up gaps in the obvious fashion (see \cite [7.2(1)]{J1}).  Define new graphs $\mathscr G^\pm$ isomorphic to $\mathscr G(\textbf{c}^-)$ as \textit{unlabelled} graphs. Let $\mathscr G^+$ be the graph obtained from $\mathscr G(\textbf{c}^-)$ by leaving the labels in $[1,s-1]$ unchanged and increasing the labels in $[s,n]$ by $1$. Let the labelling on $\mathscr G^-$ be defined by required that the hitherto defined unlabelled graph isomorphism $\varphi:\mathscr G^+ \iso \mathscr G^-$ fixes all labels with the following exception. If $v \in V(\mathscr G^+)$ has label $s+1$, then $\varphi(v) \in \mathscr G^-$ is assigned the label $s$.

  Then $\mathscr G(\textbf{c})$ is defined as the union of $\mathscr G^+$ and $\mathscr G^-$ in which each vertex $v$ of $\mathscr G^+$ with label $s+1$ is joined to $\varphi(v)$ with an edge having label $s$.

  It is convenient to view a function $f$ obtained from $\mathscr G(\textbf{c}^-)$ \textit{without} closing up gaps.  In particular $c_s$ and the $s^{th}$ co-ordinate function (represented by $r^s-r^{s+1}$ in $(4)$) will not appear in $f$.  Then $f$ may be expressed in the form
  $(c'_n,\ldots,c'_{s+1},0,c'_{s-1},\ldots,c'_1)$,
   where the $c'_i:i \in N \setminus \{s\}$ are certain linear combinations of the $c_i:i \in N \setminus \{s\}$.  Through binary fusion we obtain two elements $f^\pm$ arising from $\mathscr G(\textbf{c})$, that is  $f^+$ from $\mathscr G^+$ and $f^-$ from $\mathscr G^-$.

  \begin {lemma}  $f^\pm$ differ from $f$ only in coefficient of the $s^{th}$ co-ordinate function.
  \end {lemma}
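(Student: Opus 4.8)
The plan is to realize the passage from $\mathscr G(\textbf{c}^-)$ to $\mathscr G^\pm$ as a coordinate substitution that is invisible modulo the single coordinate $x_s=r^s-r^{s+1}$, so that only the $s^{th}$ coefficient can move. Introduce the projection $\pi$ sending $x_s\mapsto 0$ and $x_i\mapsto x_i$ for $i\neq s$; equivalently $\pi$ is induced by the identification $r^s=r^{s+1}$. Since $f$ by construction involves only the co-ordinate functions $x_i$ with $i\in N\setminus\{s\}$, we have $\pi(f)=f$, and the assertion of the lemma is precisely that $\pi(f^\pm)=f$, i.e. that $f^\pm$ and $f$ have the same image under $\pi$.

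First I would treat $f^+$. By construction $\mathscr G^+$ is $\mathscr G(\textbf{c}^-)$ relabelled by leaving labels in $[1,s-1]$ fixed and raising those in $[s,n]$ by one, so its vertex labels avoid $s$ and its edge labels are the $c_r$ with $r\in N\setminus\{s\}$. Computing $f^+=z_{v^+}$ from the evaluation rule $(1)$ in the full co-ordinates, each edge contributes $c_r(r^k-r^\ell)$ with $k,\ell\in\hat N\setminus\{s\}$. The key step is to check that, after applying $\pi$ (that is, after imposing $r^s=r^{s+1}$), this contribution becomes exactly the contribution $c_{\bar r}(\rho^{\bar k}-\rho^{\bar\ell})$ of the corresponding edge of $\mathscr G(\textbf{c}^-)$ under the closing-up identification $r^k\equiv\rho^k\ (k<s),\ r^k\equiv\rho^{k-1}\ (k>s)$: for $k,\ell$ on the same side of $s$ this is immediate, while for an edge straddling $s$ the surplus term is a multiple of $x_s$ and is killed by $\pi$. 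Hence $\pi$ intertwines the evaluation on $\mathscr G^+$ with that on $\mathscr G(\textbf{c}^-)$, and since the driving vertex is shared, $\pi(f^+)=f$; thus $f^+-f\in\mathbb Q\,x_s$.

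Next I would treat $f^-$ by comparing it with $f^+$ through the unlabelled isomorphism $\varphi\colon\mathscr G^+\iso\mathscr G^-$. Recall that $\varphi$ preserves every label except that a vertex of label $s+1$ in $\mathscr G^+$ acquires label $s$ in $\mathscr G^-$. Consequently the two evaluation rules differ only along edges incident to such vertices, where the factor $r^{s+1}$ in a contribution $c_r(r^{s+1}-r^\ell)$ is replaced by $r^{s}$; the discrepancy is $c_r(r^s-r^{s+1})=c_r\,x_s$. Under $\pi$, labels $s$ and $s+1$ yield the same Kashiwara function, so these discrepancies vanish and $\pi(f^-)=\pi(f^+)=f$. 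Therefore $f^--f^+\in\mathbb Q\,x_s$, and combined with the previous paragraph $f^--f\in\mathbb Q\,x_s$ as well.

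I expect the main obstacle to be the bookkeeping for $f^+$: verifying cleanly that the projection $\pi$ really intertwines the full-coordinate evaluation on the relabelled graph $\mathscr G^+$ with the closed-up evaluation on $\mathscr G(\textbf{c}^-)$, uniformly over all edges (including the straddling ones) and with the additive constant fixed by a common choice of driving vertex. Once this functoriality of $\pi$ is in hand, the $f^-$ case is a short consequence of the single label change $s+1\mapsto s$ effected by $\varphi$, and the lemma follows. As a cross-check one may compute the difference explicitly: writing $f=\sum_{i\in N\setminus\{s\}}c'_i x_i$ one finds $f^+=f+c'_{s-1}x_s$, confirming that only the coefficient of $x_s$ is altered.
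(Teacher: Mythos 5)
Your strategy of comparing evaluations through the projection $\pi$ killing $x_s=r^s-r^{s+1}$ is essentially the paper's own argument (the paper's substitution $\theta$ and its explicit coefficient bookkeeping play the role of your $\pi$), but there is a genuine gap exactly where the paper does its real work: your assertion that ``the driving vertex is shared.'' The functions $z_v$ are determined by the difference relations $(1)$ only up to a global additive shift, fixed by assigning the driving function to the distinguished vertex of the graph in question. Hence your edge-by-edge intertwining argument shows only that $\pi(f^+)$ and $f$ differ by a constant linear function, namely the (transformed) value of $f$'s normalisation at whichever vertex of $\mathscr G(\textbf{c})$ carries its driving function. That this shift vanishes is precisely the claim that the distinguished vertex of $\mathscr G(\textbf{c})$ lies in $\mathscr G^+$ \emph{and} corresponds under the relabelling to the distinguished vertex of $\mathscr G(\textbf{c}^-)$; this is not automatic from the fusion construction, and if the driving function were attached to any other vertex the global shift would in general not be a multiple of $x_s$, so the lemma would fail. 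The paper proves this claim via the pointed chain ($P_4$ of \cite[6.3]{J1}): the pointed chain of $\mathscr G(\textbf{c})$ has a unique edge with label $c_s$, joining a label-$s$ vertex of $\mathscr G^-$ to a label-$(s+1)$ vertex of $\mathscr G^+$, so the pointed-chain vertex with label $i$ lies in $\mathscr G^+$ exactly when $i>s$; since $s\leq n$, the label-$(n+1)$ vertex, which carries the driving function, lies in $\mathscr G^+$ as required. You flag this anchoring as an ``obstacle'' but never resolve it, and it is the crux.

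A smaller gap of the same nature occurs in your treatment of $f^-$: equality under $\pi$ of corresponding edge contributions on $\mathscr G^+$ and $\mathscr G^-$ again only yields that $\pi(f^-)-\pi(f^+)$ is a constant, and you need one anchoring pair to kill it. Here the fix is immediate from the construction you quote: a vertex $v_0$ of $\mathscr G^+$ with label $s+1$ is joined in $\mathscr G(\textbf{c})$ to $\varphi(v_0)$ (label $s$) by an edge with label $s$, so $(1)$ gives $z_{v_0}-z_{\varphi(v_0)}=c_s(r^{s+1}-r^s)$, which vanishes under $\pi$. (The paper sidesteps this entirely by citing $(8)$, i.e. \cite[7.8(i),(ii)]{J1}, which asserts $z_{\varphi(v)}-z_v\in\mathbb Q\,x_s$ directly.) Your closing cross-check $f^+=f+c'_{s-1}x_s$ does agree with the paper's computation of $\tilde f^+$, so once the two anchoring steps are supplied your argument becomes complete and coincides in substance with the paper's proof.
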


  \begin {proof}   In our construction the functions obtained from $\mathscr G^+$ are those obtained from $\mathscr G(\textbf{c}^-)$ on replacing the term $c'_{s-1}(r^{s-1}-r^s)$ by $c'_{s-1}(r^{s-1}-r^{s+1})$ and on replacing $c_i'$ by $c'_{i+1}$, when $n-1\geq i \geq s$.   In terms of co-ordinates this means that $f$ becomes  $\tilde{f}^+:=(c'_{n},c'_{n-1}, c'_{s+1},c'_{s-1},c'_{s-1},c'_{s-2}\ldots, c'_1)$.

   On the other hand $\mathscr G^+$ is a subgraph of $\mathscr G(\textbf{c})$, so by \cite [Sect. 6.2, $(P_3)$]{J1} (reproduced here as eq. $(4)$) functions corresponding to adjacent vertices are related in the same manner.  Since $\mathscr G(\textbf{c})$ is connected by construction it is enough to show that there is just one vertex which describe the same function in each graph to conclude that this holds for every common vertex.

    We claim that there is a distinguished vertex common to both graphs to which we assign the same function (the ``driving function'' which we may take to be the zero function).  This distinguished vertex is the unique vertex with label $n+1$ in the unique pointed chain \cite [Sect. 6.3, $(P_4)$]{J1} lying in $\mathscr G(\textbf{c})$.  Now this chain has a unique edge with label $c_s$ which by the above construction connects a vertex with label $s$ in $\mathscr G^-$ to the corresponding vertex with label $s+1$ in $\mathscr G^+$.  It follows that the unique vertex in the pointed chain with label $i$ lies in $\mathscr G^-$ (resp. $\mathscr G^+$) if $i\leq s$ (resp. $i>s$).  Since $s \leq n$ this proves our claim.

    Thus $f^+=\tilde{f}^+$ which indeed differs from $f$ only in the $s^{th}$ co-ordinate.

   Finally the assertion for $f^-$ obtains from that for $f^+$ through  \cite [7.8(i),(ii)]{J1} (of whose conclusion is reproduced in $(8)$ below).

  \end {proof}

  \subsection{}\label{3.2}

  Some further properties of binary fusion are given below.

  Recall \cite [7.8(i),(ii)]{J1}, that for every vertex $v$ of $\mathscr G^+$ there exists $c(v) \in \textbf{c}^-\cup \{0\}$ such that
  $$z_{\varphi(v)}-z_v=(c_s-c(v))(r^s-r^{s+1}). \eqno {(8)}$$

  Here we remark that $c_s-c(v) \geq 0$, the equality being strict if $c_s > c_j$, for all $j \in N\setminus \{s\}$.

    Recall that $s$ (resp. $s+1$) is the missing label on the vertices of $\mathscr G^+$ (resp. $\mathscr G^-$).  Define a linear map $\theta: \sum_{j\in \hat{N}\setminus \{s\}}\mathbb Z r^j \rightarrow \sum_{j\in \hat{N}\setminus \{s+1\}}\mathbb Z r^j$, by $\theta(r^{s+1})=r^s$ and $\theta(r^j)=r^j$, for $j \neq s+1$.

   Through the definition of $\varphi$ and the formula $(1)$ relating $z_v,z_{v'}$  when $v,v'$ are neighbours in an $S$ graph $\mathscr G$, it follows that
    $$z_{\varphi(v)}-z_{\varphi(v')}=\theta(z_v)-\theta(z_{v'}). \eqno {(9)}$$

    \begin {prop} Suppose $\mathscr T \in H(\textbf{c})$, then $f_\mathscr T=(c^\mathscr T_k)_{k \in N} \in K(\textbf{c}) $.
    \end {prop}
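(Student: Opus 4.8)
The plan is to argue by induction on $n$, using binary fusion to reduce to $\textbf{c}^-$ and reserving Lemma \ref{2.3} to supply the single new inequality at each stage. The base case $n=1$ is immediate: then $K(\textbf{c})$ is cut out by $(2)$ alone, and the two vertices of $\mathscr G(\textbf{c})$ yield $c_1'=0$ and $c_1'=c_1$, both lying in $[0,c_1]$.

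For the inductive step I would fix $\mathscr T \in H(\textbf{c})$, let $v$ be its vertex in $\mathscr G(\textbf{c})$, and recall the $\prec$-maximal index $s=s_n$. By binary fusion $\mathscr G(\textbf{c})=\mathscr G^+\cup\mathscr G^-$ with each of $\mathscr G^\pm$ isomorphic, as an unlabelled graph, to $\mathscr G(\textbf{c}^-)$; hence $v$ lies in $\mathscr G^+$ or in $\mathscr G^-$ and determines a single vertex $w$ of $\mathscr G(\textbf{c}^-)$, i.e. a tableau $\mathscr T'\in H(\textbf{c}^-)$ with $f_{\mathscr T'}=f$; by the inductive hypothesis $f\in K(\textbf{c}^-)$.

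The key point is that the relations $(3)_k$ with $k\leq n-1$ involve only the coordinates indexed by $N_k=\{s_1,\ldots,s_k\}$, none of which is $s$; since deleting $c_s$ leaves each $N_k$ ($k\leq n-1$) and its natural order untouched, these are exactly the defining relations of $K(\textbf{c}^-)$. By Lemma \ref{3.1} both $f^+$ and $f^-$ differ from $f$ only in the $s$-th coordinate, so $f_\mathscr T$ agrees with $f$ throughout $N\setminus\{s\}$, in particular on every $N_k$ with $k\leq n-1$. Consequently $f\in K(\textbf{c}^-)$ forces $f_\mathscr T$ to satisfy $(3)_k$ for all $k\leq n-1$. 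The one remaining inequality $(3)_n$, together with the box constraint $(2)$, is precisely the content of Lemma \ref{2.3} for $\mathscr T\in H(\textbf{c})$. Assembling these gives $(2)$ and $(3)_k$ for every $k\in N$, i.e. $f_\mathscr T\in K(\textbf{c})$.

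The hard part will be the bookkeeping concealed in the key point: one must verify that deleting the $\prec$-maximal index and closing up gaps identifies the lower system $\{(3)_k\}_{k<n}$ of $K(\textbf{c})$ with the entire system defining $K(\textbf{c}^-)$, and that Lemma \ref{3.1} really confines the discrepancy between $f_\mathscr T$ and $f$ to the coordinate $s$ for vertices of $\mathscr G^-$ as well as $\mathscr G^+$ (for the former through $(8)$). It is exactly because $(3)_n$ alone does not imply the lower relations (Remark 1) that the inductive input from $K(\textbf{c}^-)$ is indispensable and Lemma \ref{2.3} cannot be used by itself.
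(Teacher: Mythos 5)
Your proposal is correct and follows essentially the same route as the paper: the paper's own (much terser) proof likewise uses Lemma \ref{2.3} to obtain $(2)$ and $(3)_n$, then applies the induction hypothesis to $\mathscr G(\textbf{c}^-)$ and invokes Lemma \ref{3.1} to confine the discrepancy in the coordinates to the $s$-th entry, which is exactly your ``key point''. The bookkeeping you flag (that deleting the $\prec$-maximal index identifies the lower relations $(3)_k$, $k<n$, with the defining system of $K(\textbf{c}^-)$) is left implicit in the paper, and your spelling it out is a faithful completion of the same argument rather than a departure from it.
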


    \begin {proof}  By Lemma \ref {2.3} it remains to show that $(3)_k$, holds for all $k \in N$.  The proof is by reverse induction on $k$.  For $k=n$, it results from Lemma \ref {2.3}.  For $k<n$ we may apply the induction hypothesis to the graph $\mathscr G(\textbf{c}^-)$.  Then the assertion follows from  Lemma \ref {3.1}.

    \end {proof}

    %\
%
%   \textbf{ Example.}  Take $n=3$ with $1 \prec 3 \prec 2$.  Let $r^{i+1}-r^i:i=1,2,3$ be represented by the $i^{th}$ co-ordinate function.  Then $\mathscr G(\textbf{c})=\{(0,0,0),(0,c_2-c_3,0);(0,c_2,c_3),(0,0,c_3);\newline (c_1,c_1,c_1),(c_1,c_1+c_2-c_3,c_1);(c_1,
%   c_1,c_3),(c_1,c_2,c_3)\}$

     \subsection{}\label{3.3}

      Let $B$ be the vector space $\mathbb Q^\infty$, that is to say consisting of countably many copies of $\mathbb Q$ in which all but finitely many entries are zero.  The Kashiwara functions $r^j_i: k \in \mathbb N^+$, where $i$ runs over an index set of simple roots are the linear functions on $B$ defined in \cite [2.3.2]{J0}.  They arise from the Kashiwara tensor product rule.  Here $i$ is fixed and this index suppressed and $\mathbb N^+$ is replaced by the finite subset $\hat{N}$.  For our present purposes the $r^j:j \in \hat{N}$ referred to in earlier sections as the Kashiwara functions may be viewed as infinite linear combinations of the co-ordinate functions on $B$.  Moreover the entries of $B$ corresponding to an index $>n+1$ may be taken to be equal to zero, so that the Kashiwara functions may be more simply viewed as linear functions on an $n+1$ dimensional vector space, which we again denote by $B$, over $\mathbb Q$. They are linearly independent and so for any $n+1$-tuple $(q_1,q_2,\ldots,q_{n+1}$ of elements of $\mathbb Q$, there exists $b \in B$ such that $r^j(b)=q_j$, for all $j \in \hat{N}$.

     \begin {lemma}  Take \textit{any} linear ordering on $Z(\textbf{c})$ and let $z_{\max}$ be the unique maximal element in $Z(\textbf{c})$ with respect to that ordering.  Then there is choice of $b \in B$ such that $z_{\max}(b)>z(b)$ for all $z \in Z(\textbf{c})\setminus\{z_{\max}\}$.
  \end {lemma}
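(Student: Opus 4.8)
The plan is to recast the statement as a strict separation of finitely many points by a linear direction, and then to establish that separation by induction on $n$ through binary fusion, keeping the argument independent of Theorem \ref{1.4}. Put $x_k:=r^k-r^{k+1}$ for $k\in N$. By $(4)$ each $z\in Z(\textbf c)$ is the point $p_z:=(c^{\mathscr T}_k)_{k\in N}\in\mathbb Q^n$, and for $b\in B$ one has $z(b)=\sum_{k\in N}c^{\mathscr T}_k\,x_k(b)=\langle p_z,w(b)\rangle$ with $w(b):=(x_1(b),\dots,x_n(b))$. Since the $r^j:j\in\hat N$ are linearly independent and their values on $b$ may be prescribed arbitrarily (\ref{3.3}), every $w\in\mathbb Q^n$ occurs as some $w(b)$. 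Thus the Lemma is equivalent to the existence, for the designated $z_{\max}$, of a direction $w$ with $\langle p_{z_{\max}}-p_z,w\rangle>0$ for all $z\neq z_{\max}$; as the order is arbitrary, $z_{\max}$ may be any element of $Z(\textbf c)$, so the content is that every point of $Z(\textbf c)$ is exposed in the finite set $\{p_z\}$.

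For the induction let $s$ be the $\prec$-maximal index and split $Z(\textbf c)=Z^+\cup Z^-$ by binary fusion, $Z^{\pm}$ being the functions from $\mathscr G^{\pm}$. By Lemma \ref{3.1} and $(8)$, a vertex $v\in\mathscr G^+$ and its image $\varphi(v)$ yield functions agreeing off the $s$-th coordinate, with $z_{\varphi(v)}=z_v+(c_s-c(v))\,x_s$ and $c_s-c(v)\ge 0$; both restrict on the coordinates $x_j:j\neq s$ to a single function $g_v$ of $\mathscr G(\textbf c^-)$. Because $\mathscr T\mapsto f_{\mathscr T}$ separates classes (\ref{2.1}), this shadow map $z\mapsto g_v$ is injective on each of $Z^+$ and $Z^-$; hence the only distinct functions with equal shadow are fusion partners $z_v,z_{\varphi(v)}$, and these coincide precisely when $c_s=c(v)$. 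Writing $g_{v_0}$ for the shadow of $z_{\max}$, I fix any order on $Z(\textbf c^-)$ with $g_{v_0}$ maximal and apply the induction hypothesis to get values $b'$ for the coordinates $x_j:j\neq s$ with $g_{v_0}(b')>g_v(b')$ whenever $g_v\neq g_{v_0}$. I extend $b'$ to $b\in B$ by setting $x_s(b)=\lambda$, choosing $\lambda<0$ if $z_{\max}\in Z^+$ and $\lambda>0$ if $z_{\max}\in Z^-$.

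Now for any $z\neq z_{\max}$ whose shadow differs from $g_{v_0}$, the difference $z_{\max}(b)-z(b)$ equals $[g_{v_0}(b')-g_v(b')]$ plus a fixed multiple of $\lambda$, with the bracket strictly positive by induction, so it remains positive once $|\lambda|$ is smaller than a bound read off from the finitely many rational $s$-coordinate differences. The sole comparison with vanishing bracket is that of $z_{\max}$ against its own partner, where $z_{\max}(b)-z(b)=\pm(c_s-c(v_0))\,\lambda$; the chosen sign of $\lambda$ makes this strictly positive, the case $c_s=c(v_0)$ being excluded since then the two functions are equal and absent from $Z(\textbf c)\setminus\{z_{\max}\}$. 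The crux, and the main obstacle, is exactly this point: the inductive direction $b'$ leaves $z_{\max}$ and its fusion partner tied in the $s$-coordinate, so their separation must be carried entirely by the sign of the one extra parameter $\lambda$ while not upsetting the strict inequalities already secured by induction; a finiteness bound on $|\lambda|$ reconciles the two requirements. Alternatively, granting Theorem \ref{1.4} the $p_z$ are precisely the vertices of the bounded polytope $K(\textbf c)$, each necessarily exposed, which furnishes $w$ immediately; I favour the inductive route so as not to presuppose that Theorem.
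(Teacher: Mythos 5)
Your proof is correct, and it shares the paper's overall skeleton: induction on $n$ through binary fusion, with the decisive separation carried by the sign of the free $s$-th coordinate, by equation $(8)$ together with $c_s-c(v)\geq 0$, and by the observation that $c_s=c(v_0)$ forces the two partner functions to coincide (so that case needs no comparison). Indeed your choice of the sign of $\lambda=x_s(b)=r^s(b)-r^{s+1}(b)$ is exactly the paper's choice $r^s(b)<r^{s+1}(b)$ (resp. $>$) according to which half contains $z_{\max}$. Where you genuinely diverge is the inductive step. The paper applies the induction hypothesis to $\mathscr G^+$ with its induced ordering and then transfers the resulting separation to $\mathscr G^-$ through the substitution map $\theta$ and equation $(9)$, by placing $r^s(b)$ in the same gap as $r^{s+1}(b)$ among the remaining values (eqs. $(10)$, $(11)$); that transfer in effect relies on the principle, recorded only afterwards as a Remark, that the signs of the differences $z(b)-z'(b)$ depend solely on the ordering of the values $r^j(b)$. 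You avoid this entirely: you collapse each fusion pair to its common shadow in $Z(\textbf{c}^-)$ (Lemma \ref{3.1}), use the separation of classes (\cite[4.7]{J1}, quoted in \ref{2.1}) to see the shadow map is injective on each half, so the only equal-shadow pair of distinct functions is a fusion pair $\{z_v,z_{\varphi(v)}\}$, apply the induction hypothesis once to $Z(\textbf{c}^-)$, and then recover every comparison with distinct shadows by taking $|\lambda|$ below an explicit finite bound, reserving the sign of $\lambda$ for the single partner comparison. The quantitative small-$|\lambda|$ bound is what replaces the ordering principle and makes your induction step self-contained; conversely, the paper's route yields that principle as a by-product (its concluding Remark), which your argument does not reproduce. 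Two minor points: you should state the trivial base case ($n=0$, where $\mathscr G(\textbf{c})$ is a single vertex), and you were right to discard the shortcut via Theorem \ref{1.4}, which would be circular since the present Lemma feeds Corollary \ref{3.4} and thence the Theorem's proof in \ref{3.7}.
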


  \begin {proof}

  The proof is by induction on $n$.  The assertion is obvious if $n=0$, since $\mathscr G(\textbf{c})$ is reduced to a single vertex.

  Recall the construction of binary fusion and the choice of $s$ given in \ref {3.1}.

  We may regard the linear ordering on $Z(\textbf{c})$ as a linear ordering on $\mathscr G(\textbf{c})$.  It induces a linear ordering on the subgraph $\mathscr G^+$.

  Now let $v^+_{\max}$ be the unique maximal element of $\mathscr G^+$.  By the induction hypothesis we can suppose that there exists  $b \in B$ such that $z_{v^+_{\max}}(b) > z(b)$ for all $z \in \mathscr G^+ \setminus \{z_{v^+_{\max}}\}$. Let $N^-, N^+$ be the maximal subsets of $\hat{N}\setminus \{s,s+1\}$ such that
  $$r^i(b)<r^{s+1}(b)<r^j(b), \forall i \in N^-, j \in N^+. \eqno{(10)}$$

   Set ${v^-_{\max}}=\varphi({v^+_{\max}})$. Since $s$ is the missing label on the vertices of $\mathscr G^+$, we can take $r^s(b)$ to be any element of $\mathbb Q$ and in particular such that $$r^i(b)<r^{s}(b)<r^j(b), \forall i \in N^-, j \in N^+, \eqno {(11)}$$

   Then by $(9)$ it follows that $z_{v^-_{\max}}(b) > \varphi(z)(b)$ for all $z \in \mathscr G^+ \setminus \{z_{v^+_{\max}}\}$.
  %and that the order relations between the $r^i(b):i \in N^-$ and the $r^j(b):j \in N^+$ have been left unchanged.

  Now let $v_{\max}$ be the unique maximal element of $\mathscr G(\textbf{c})$.  Suppose $v_{\max}$ belongs to $\mathscr G^+$ (resp. $\mathscr G^-$), that is $v_{\max}=v^+_{\max}$ (resp. $v_{\max}=v^-_{\max}$).   Clearly we can still choose $b \in B$ such that $r^s(b)<r^{s+1}(b)$ (resp. $r^s(b)>r^{s+1}(b)$) leaving the values of the $r^j(b):b \in \hat{N}\setminus \{s\}$ and the order relations in $(10),(11)$
  %(and those between the $r^i(b):i \in N^-$, $r^j(b):j \in N^+$)
  unchanged.

  It follows from $(8)$ that the choice of $b \in B$ satisfies the conclusion of the lemma, noting that $c_s-c(v^+_{\max})=0$, means that $z_{v^+_{\max}}=z_{v^-_{\max}}=z_{v_{\max}}$.

  \end {proof}

  %\textbf{Remark 1}.  This is one further remarkable property of the $S$-graphs described in \cite {JL}.

\textbf{Remark.}  Suppose that $b \in B$ is chosen so that the $r^j(b):j \in \hat{N}$ are pairwise distinct.  It is clear from the above proof that it is only the ordering between the $r^j(b):j \in \hat{N}$, which determines the ordering between the $z(b):z\in Z(\textbf{c})$.

 \subsection{}\label{3.4}

 %Recall (\ref {4.6}) that by definition $z_v =\theta(K_v)$ and that lemma \ref {4.6} asserts that $K_v \in \mathscr K^{BZ}_t$, for all $v \in V(\mathscr G(\textbf{c}))$.
% %Set $Z(\textbf{c})= \{z_v\}_ {v\in V(\mathscr G(\textbf{c}))}$.

 \begin {cor}  $Z(\textbf{c})=E(Z(\textbf{c}))$.
 \end {cor}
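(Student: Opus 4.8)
The plan is to exploit Lemma \ref{3.3}, which already contains all the geometric content; the corollary then follows by a standard separating-functional argument. Since $\mathscr G(\textbf{c})$ has finitely many vertices, $Z(\textbf{c})$ is a \emph{finite} subset of $\mathbb Q^n$, where by $(4)$ an element $z=\sum_{i\in N}c_i^z(r^i-r^{i+1})$ is identified with its coefficient vector $(c_i^z)_{i\in N}$. For a finite set the inclusion $E(Z(\textbf{c}))\subseteq Z(\textbf{c})$ is automatic, so it suffices to prove the reverse inclusion, namely that every element of $Z(\textbf{c})$ is extremal.

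First I would record the elementary criterion that a point $z_0$ of a finite set $S$ is extremal in $\operatorname{conv}(S)$ whenever some linear functional $\ell$ attains a strict maximum over $S$ at $z_0$: if $z_0=\sum_q\lambda_q q$ were a convex combination of the remaining points of $S$, then applying $\ell$ would give $\ell(z_0)=\sum_q\lambda_q\ell(q)<\ell(z_0)$, a contradiction. Thus the task reduces to producing, for each fixed $z_0\in Z(\textbf{c})$, a linear functional on $\mathbb Q^n$ that is strictly maximized over $Z(\textbf{c})$ at $z_0$.

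The key observation is that evaluation at a point $b\in B$ supplies exactly such a functional. Writing $x_i=r^i-r^{i+1}$ as in \ref{1.2}, one has $z(b)=\sum_{i\in N}c_i^z\,x_i(b)$, which is a linear function of the coordinate vector $(c_i^z)$; hence $z\mapsto z(b)$ is a linear functional on the ambient $\mathbb Q^n$. Now, given $z_0$, I would choose any linear ordering on $Z(\textbf{c})$ in which $z_0$ is the (unique) maximal element and apply Lemma \ref{3.3} with $z_{\max}=z_0$. This yields $b\in B$ with $z_0(b)>z(b)$ for all $z\in Z(\textbf{c})\setminus\{z_0\}$, so the evaluation functional is strictly maximized at $z_0$ and $z_0$ is extremal. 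As $z_0$ was arbitrary, $Z(\textbf{c})\subseteq E(Z(\textbf{c}))$, giving the claimed equality.

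There is essentially no obstacle here beyond bookkeeping: the real work was done in establishing Lemma \ref{3.3}, and the only point requiring care is the identification of ``evaluation at $b$'' with a genuine linear functional on the coordinate space $\mathbb Q^n$ in which convexity is measured, which is immediate from $(4)$ and the definition of the $x_i$.
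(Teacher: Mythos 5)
Your proof is correct and is essentially the paper's own argument: both invoke Lemma \ref{3.3} (applied to an ordering in which the given element $z_0$ is maximal) to produce $b \in B$ at which evaluation strictly separates $z_0$ from the rest of $Z(\textbf{c})$, and then conclude extremality because a nontrivial convex combination would violate the strict inequality upon evaluation at $b$. The only difference is presentational — you package the final step as a general ``strict maximizer of a linear functional is extremal'' criterion, while the paper substitutes $b$ directly into the supposed convex combination — but the mathematical content is identical.
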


 \begin {proof}  Take $z \in Z(\textbf{c})$.  Suppose that there exists a finite subset $F$ of $Z(\textbf{c})$ and positive rationals $a^v:v \in F$ summing to $1$ such that $z=\sum_{v\in F}a^vz_v$.  By Lemma \ref {3.3}, there exists $b \in B$ such that $z(b)\geq z_v(b): v \in F$, with a strict inequality if $z\neq z_v$. Then substitution forces  $z(b)= z_v(b)$, for all $v \in F$.  Hence the assertion.
 \end {proof}

% \subsection{}\label{5.4}
%
% In order to obtain the reverse inclusion in Lemma \ref {4.10} we first need a converse to Lemma \ref {4.7}, that is to say there exists a complete tableau $\mathscr T \in H(\text{c})$ such that $z_v=\mathscr T$, satisfies the conclusion of Lemma \ref {4.7}.  For this we recall some further results of \cite {J3}.

 \subsection{}\label{3.5}

 Recall the notation and definitions of \ref {1.4}.

 \begin {prop}  $K(\textbf{c})$ lies in the convex hull of $Z(\textbf{c})$.
 \end {prop}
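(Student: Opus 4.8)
The plan is to argue by induction on $n$, exploiting the binary fusion decomposition of \ref{3.1}. Let $s$ be the $\prec$-maximal element of $N$ and let $\pi\colon \mathbb Q^N\to \mathbb Q^{N\setminus\{s\}}$ be the projection forgetting the $s^{th}$ coordinate, all coordinates taken in the ``not closing up gaps'' convention of \ref{3.1}. The inductive hypothesis is that the Proposition holds for $\textbf{c}^-$; combined with Proposition \ref{3.2} applied to $\textbf{c}^-$, this yields $K(\textbf{c}^-)=\operatorname{conv}(Z(\textbf{c}^-))$.

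First I would pin down how the coordinate $c_s'$ enters the defining relations of $K(\textbf{c})$. Since $s=s_n$ is adjoined only at the last stage, $c_s'$ occurs in $(3)_k$ only for $k=n$, where by \ref{1.5} (with $s=t_j$ and natural neighbours $s-1=t_{j-1}$, $s+1=t_{j+1}$) the relevant relations are exactly $c_s'\ge c_{s-1}'$ and $c_s'\le c_{s+1}'+c_s-c_{s+1}$; together with $(2)$, and using $c_{s-1}'\ge 0$ and $c_{s+1}'\le c_{s+1}$, these are the only effective constraints on $c_s'$. Hence $\pi$ maps $K(\textbf{c})$ onto $K(\textbf{c}^-)$, and $K(\textbf{c})$ is the region between the two affine graphs $L(\bar P):=c_{s-1}'$ and $U(\bar P):=c_{s+1}'+c_s-c_{s+1}$ over the base $K(\textbf{c}^-)$ (with the evident conventions when $s=1$ or $s=n$, reading $c_{s-1}'$ and $c_{s+1}',c_{s+1}$ as $0$ respectively). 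Non-emptiness of the fibres, $L\le U$ on $K(\textbf{c}^-)$, follows from $Z(\textbf{c})\subset K(\textbf{c})$ and convexity.

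Next I would recall from Lemma \ref{3.1} and $(8)$ that each $f\in Z(\textbf{c}^-)$, say $f=z_v$ with $v\in \mathscr G^+$, has exactly two lifts $f^+=z_v$ and $f^-=z_{\varphi(v)}$ in $Z(\textbf{c})$, with $\pi(f^\pm)=f$ and $(f^+)_s=c_{s-1}'=L(f)$, while $(f^-)_s=(f^+)_s+(c_s-c(v))$. The decisive point is the \textbf{Key claim:} $(f^-)_s=U(f)$, equivalently $c(v)=c_{s-1}'-c_{s+1}'+c_{s+1}$; that is, the $\mathscr G^-$-functions saturate the upper relation at $s$, just as the $\mathscr G^+$-functions saturate the lower one. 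Granting it, the induction closes at once: given $P=(\bar P,c_s')\in K(\textbf{c})$, write $\bar P=\sum_v a_v f_v$ as a convex combination of $Z(\textbf{c}^-)$ by the inductive hypothesis; since $L,U$ are affine, $c_s'\in[L(\bar P),U(\bar P)]=[\sum_v a_v L(f_v),\sum_v a_v U(f_v)]$, so there is $\lambda\in[0,1]$ with $\lambda L(\bar P)+(1-\lambda)U(\bar P)=c_s'$. Then $P=\sum_v a_v\bigl(\lambda f_v^+ +(1-\lambda)f_v^-\bigr)$ expresses $P$ as a convex combination of elements of $Z(\textbf{c})$, since both the projection and the $s^{th}$ coordinate match.

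The main obstacle is the Key claim. One inequality, $c(v)\ge c_{s-1}'-c_{s+1}'+c_{s+1}$, is free: it merely records that $f^-\in K(\textbf{c})$ satisfies the upper relation at $s$, which holds by Proposition \ref{3.2}. The reverse inequality — the actual saturation — is genuinely extra information about $\mathscr G^-$ and is \emph{not} forced by membership in $K(\textbf{c})$ alone; I would establish it either from the explicit value of $c(v)$ furnished by binary fusion in \cite[7.8]{J1}, or by applying the tableau formula of Lemma \ref{2.2} to a complete tableau representing $f^-$ and checking that the column neighbouring $C_{s+1}$ carries the maximal coefficient $c_s$. This asymmetry between the upper and lower faces is exactly the failure of inversion symmetry recorded in \ref{2.1}.
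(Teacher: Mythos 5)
Your reduction is sound, and in fact it mirrors the paper's own proof: the paper also inducts on $n$ via binary fusion, identifies $K(\textbf{c})$ as the region over $K(\textbf{c}^-)$ between the affine bounds $c_{s-1}'$ and $c_{s+1}'+c_s-c_{s+1}$ (its inequality $(18)$), shows that the $\mathscr G^+$-functions realize the lower bound (its $(12)$) and the $\mathscr G^-$-functions the upper bound (its $(17)$), and then interpolates. But what you call the Key claim is not an ancillary fact to be quoted away: it is the entire technical content of the paper's proof, and you have not proved it. Neither of your two suggested routes closes the gap as stated. Equation $(8)$, i.e. \cite[7.8(i),(ii)]{J1} as invoked in this paper, gives only the \emph{existence} of some $c(v)\in\textbf{c}^-\cup\{0\}$ with $z_{\varphi(v)}-z_v=(c_s-c(v))(r^s-r^{s+1})$; your claim amounts to the identity $c(v)=c_{s-1}'-c_{s+1}'+c_{s+1}$, whose right-hand side is a linear combination of the $c_i$ depending on $v$ through the coordinates of $z_v$, so it cannot be read off from a list of admissible values of $c(v)$ --- one must determine which value occurs at which vertex, and that is precisely the hard part. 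Likewise the tableau route presupposes that you can identify a complete tableau representing $z_{\varphi(v)}$ and locate the top-level neighbour of its strongly extremal column, which is essentially equivalent to what is to be proved.

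The paper's resolution is worth contrasting with your formulation, because it shows why parametrising the problem by $c(v)$ is the wrong move. Instead of computing $c(v)$ vertex by vertex, the paper proves the uniform identity $z_{\varphi(v)}=\theta(z_v)+(c_s-c_{s+1})\delta_s$ (its equation $(14)$): the $v$-dependence is absorbed into the relabelling map $\theta$, leaving a shift $c_s-c_{s+1}$ that is the \emph{same} for every vertex. This is established by first showing that $\varphi$ intertwines differences, $z_{\varphi(v)}-z_{\varphi(v')}=\theta(z_v)-\theta(z_{v'})$ (its $(16)$), a consequence of the definition of $\varphi$ and property $P_3$ of \cite[Sect. 6]{J1}; connectivity of $\mathscr G(\textbf{c})$ then reduces $(14)$ to a single vertex, which is verified explicitly at the vertex of the pointed chain with label $s+1$, where both functions are computable. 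In coordinates, $(14)$ says exactly that the $s$-th entry of $z_{\varphi(v)}$ is $c_{s+1}'+c_s-c_{s+1}$, i.e. your saturation statement. Note also the paper's Remark 2: $z_{\varphi(v)}=z_v+(c_s-c_{s+1})\delta_s$ is \emph{false} in general, so the twist by $\theta$ is essential --- a warning that any argument taking the untwisted relation $(8)$ as its basic datum, as yours does, must rediscover $\theta$ in some form. The remainder of your write-up (the fibre description of $K(\textbf{c})$ over $K(\textbf{c}^-)$, the bound $L\le U$ by affineness, and the interpolation $\sum_v a_v\bigl(\lambda f_v^++(1-\lambda)f_v^-\bigr)$) is correct and agrees with the paper's argument.
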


% In this section we show that the opposite inclusion to that of Lemma \ref {4.10} holds if $\text {(i)}'$ of \ref {4.6} holds.  In this we fix $s \in I$ and consider an element $K' \in \mathscr K_t$ obtained from an element $K\in \mathscr K_t$ by adjoining faces of type $s$ to $K$, that is $K'=K+\sum_{j=1}^n c_j'F_s^i$. Here we only assume the $c_j'$ to be rational.  When we take $K=K_{\min}$ we may write $K_{\max}=K_ {\min}+\sum_{j=1}^n c_jF_s^j$ with $c_i\in \mathbb N$.  Then $K'\in [K_{\min},K_{\max}]$ means that
% $$0\leq c_j' \leq c_j, \forall j \in J.$$

\begin {proof}

 The proof is by induction on $n$.  For $n=1$, condition $(3)$ is empty whilst $(2)$ just means that $c'_1$ lies in the convex hull of the pair $\{0,c_1\}$, as required.

 As in \ref {1.3}, lift the partial order on $\{c_i\}_{i \in N}$ induced by the natural order on $\mathbb N$ to a linear order $\prec$ and let $\textbf{c}$ be the resulting set with respect to $\prec$.

 Let $c_s$ be the unique maximal element of $\textbf{c}$ and set $\textbf{c}^-=\textbf{c}\setminus \{c_s\}$.  By the induction hypothesis we can assume that the assertion of the proposition has been proved with respect to the graph $\mathscr G(\textbf{c}^-)$, which we recall (\ref {3.1}) is relabelled by closing up gaps.

 Recall (\ref {3.1}) that $\mathscr G^+$ is the graph obtained from  $\mathscr G(\textbf{c}^-)$ by increasing the labels in $[s,n-1]$ by $1$.

% Now recall the construction of $\mathscr G(\textbf{c})$ given in \cite [Sect.7]{J3}, \cite [5.6.1]{JL}, or in the proof of Lemma \ref {5.2}.  In particular $\mathscr G^+$ is the graph obtained from  $\mathscr G(\textbf{c}^-)$ by increasing the labels in $[s,n-1]$ by $1$ (see \cite [7.2 (2)]{J3}).

  Consider a convex linear combination of elements $z_v:v \in V(\mathscr G(\textbf{c}^-))$ represented by the sequence $\textbf{c}':=(c_{n}',c_{n-1}',\ldots,c'_{s+1},c'_{s-1},\ldots,c'_1)$, with $(2)$ and $(3)$ being satisfied with respect to $\textbf{c}^-$.

 % The repetition of $c'_{s-1}$ is on account of the term $c'_{s-1}(r^{s-1}-r^{s})$ being modified to $c'_{s-1}(r^{s-1}-r^{s+1})$,
  The shifting of indices in the construction of $\mathscr G^+$ means that the term $c'_{s-1}(r^{s-1}-r^{s})$ is modified to $c'_{s-1}(r^{s-1}-r^{s+1})$ and that $c'_i$ is replaced by $c'_{i+1}$ when $n-1\geq i\geq s$.

  Thus $\textbf{c}'$ becomes
  $$\textbf{c}'':=(c_n',c_{n-1}', \ldots, c_{s+1}', c_{s-1}',c_{s-1}',c'_{s-2},c_{s-3}'\ldots,c_1') \eqno {(12)}$$ as the corresponding convex linear combination of elements  $z_v:v \in V(\mathscr G^+)$.

   Now $\mathscr G^-$ is obtained from $\mathscr G^+$ by an unlabelled graph isomorphism $\varphi$ which just relabels a vertex with label $s+1$ by the label $s$.  This replaces the term $c'_{s-1}(r^{s-1}-r^{s+1})$ by $c'_{s-1}(r^{s-1}-r^{s})$ and the term $c'_{s+1}(r^{s+1}-r^{s+2})$ by $c'_{s+1}(r^{s}-r^{s+2})$, leaving the remaining terms unchanged.

   %This may be described through the linear map $\theta: \sum_{j\in \hat{N}\setminus \{s\}}\mathbb Z r_j \rightarrow \sum_{j\in \hat{N}\setminus \{s+1\}}\mathbb Z r_j$, defined by $\theta(r^{s+1})=r^s$ and $\theta(r^j)=t^j$, for $j \neq s+1$.

   Recalling the notation of \ref {3.2}, it follows that $\textbf{c}'$ becomes the element
   $$\theta(\textbf{c}''):=(c_n',c_{n-1}', \ldots, c_{s+1}', c_{s+1}',c_{s-1}',c'_{s-2},c_{s-3}'\ldots,c_1')\eqno{(13)}$$ as the corresponding convex linear combination of elements  $z_v:v\in V(\mathscr G^-)$.

   However this is not quite the end of the story since we must view $\mathscr G^-$ as a subset of $\mathscr G(\textbf{c})$.  In this we recall that  $\mathscr G(\textbf{c})$ is the union of $\mathscr G^\pm$ in which a vertex in $\mathscr G^+$ with label $s+1$ is joined to $\varphi(v)$ (which has label $s+1$) by an edge with label $s$. This changes the value of the functions associated to the vertices of $\mathscr G^-$.  We calculate this change below.

   Fix $v \in V(\mathscr G^+)$, so then $\varphi(v) \in V(\mathscr G^-)$.
   %Let $\theta(z_v)$ denote the function obtained from $z_v$ through $(31),(32)$.
   Let $\delta_s$ denote the $n$-tuple which is $1$ is the $s^{th}$ entry and $0$ elsewhere.  We claim that
   $$z_{\varphi(v)}=\theta(z_v)+(c_s-c_{s+1})\delta_s. \eqno {(14)}$$

   To show this, consider neighbours $v,v' \in \mathscr G^+$. Recalling $(P_1)$ (resp. $(P_2)$) of \cite [Sect. 6]{J1}, let $i_v \in \hat{N}$ (resp.  $i_{v,v'} \in N$) be the label assigned to the vertex $v$ (resp. assigned to the edge joining $v,v'$).  Then by $P_3$ of \cite [Sect. 6]{J1} one has
   $$z_v-z_{v'}=c_{v,v'}(r^{i_v}-r^{i_{v'}}). \eqno {(15)}$$

   By definition of $\varphi$ the indices on coefficients do not change, that is $c_{\varphi(v),\varphi(v')}=c_{v,v'}$.  Again $i_v-i_{\varphi(v)}$ equals $0$ unless $i_v=s+1$, in which case it equals $1$.  Then from the definition of $\theta$ it follows from $(15)$ that
   $$z_{\varphi(v)}-z_{\varphi(v')}=\theta(z_v)-\theta(z_{v'}). \eqno {(16)}$$

   Recall ($(P_5)$ of \cite [6.3]{J1}) that $\mathscr G(\textbf{c})$ is connected. Then by $(16)$, it suffices to establish $(14)$ for just one element $v\in V(\mathscr G^+)$.  In this we shall adopt the convention that $r^{n+1}=0$.

   Recall \cite [6.3]{J1} the notion of a pointed chain and let $v \in V(\mathscr G^+)$ be the unique element in the pointed chain with label $s+1$, that is to say $i_v=s+1$.  Then either $z_v$ is the driving function $z$ or the previous element in the pointed chain has label $s+2$, in which case $z_v=z+\sum_{j=s+1}^nc_j(r^j-r^{j+1})$.  On the other hand the next element in the pointed chain has label $s$ and indeed coincides with $\varphi(v)$.

   Since $v, \varphi(v)$ are joined by an edge with label $s$ it follows from $P_3$ of \cite [6.3]{J1} that $z_v-z_{\varphi(v)}=c_s(r^{s+1}-r^s)$.  On the other hand from the previous formula we obtain $z_v-\theta(z_v)=c_{s+1}((r^{s+1}-r^{s+2})-(r^{s}-r^{s+2}))=-c_{s+1}(r^s-r^{s+1})$.  Subtraction gives $(14)$ for this particular choice of $v \in V(\mathscr G^+)$.

   In view of $(14)$ it follows that  $\textbf{c}'$ becomes the element
   $$\textbf{c}''':=(c_n',c_{n-1}', \ldots, c_{s+1}', c_{s+1}'+(c_s-c_{s+1}),c_{s-1}',c'_{s-2},c_{s-3}'\ldots,c_1')\eqno{(17)}$$
    as the corresponding convex linear combination of elements  $z_v:v\in V(\mathscr G^-)$, when $\mathscr G^-$ is viewed as a subgraph of $\mathscr G(\textbf{c})$ by the above construction.

   Taking a convex linear combination of these two elements lying in $K(\textbf{c})$ we may construct a third element whose entry $c_s'$ in the $s^{th}$ place is a convex linear combination of the pair $(c'_{s-1}, c'_{s+1}+c_s-c_{s+1})$.

   It remains to show that $c_j':j \in N$ give the most general possible solution to $(2),(3)$.  For $j\neq s$ this holds by the induction hypothesis.  Furthermore in this we may note that $c'_{s+1}-c_{s-1}' \geq \min \{0,c_{s+1}-c_{s-1}\} =  -\max\{0,(c_{s-1}-c_{s+1})\}$. On the other hand by choice of $s$, one has $\max\{0,(c_{s-1}-c_{s+1})\}\leq \max\{0,(c_{s}-c_{s+1})\}=c_s-c_{s+1}$.  Thus by $(3)'_{n-1}$ and the induction hypothesis $c'_{s+1}+c_s-c_{s+1}\geq c_{s-1}'$ and so by $(12)$ and $(17)$ we can make any choice of $c_s'$ satisfying
   $$c_{s-1}'\leq c_s'\leq c'_{s+1}+c_s-c_{s+1}. \eqno{(18)}$$.

    In view of the definition of $s$, the inequalities in $(18)$ are those of $(3)'_n$.  Moreover by the second inequality in $(2)$ for $k=s+1$, it also gives the second inequality in $(2)$ for $k=s$. Finally the first inequality in $(18)$ combined with the first inequality in $(2)$ for $k=s-1$, gives the first inequality in $(2)$ for $k=s$.  This completes the induction step.

   \end {proof}

   \textbf{Remark 1.}  It is not hard to check that this argument can be used to give a second proof of Proposition \ref {3.2}.

   \

   \textbf{Remark 2}.  A propos equation $(14)$ it is \textit{false} in general that
   $$z_{\varphi(v)}=z_v+(c_s-c_{s+1})\delta_s,$$
   though this does hold for some choices of $v \in V(\mathscr G^+)$.

  \subsection{}\label{3.6}

  \begin {lemma}  $K(\textbf{c})$ is a convex set.
  \end {lemma}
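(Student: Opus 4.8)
The plan is to recognize $K(\textbf{c})$ as the solution set of a finite system of \emph{linear} inequalities in the coordinates $c'_1,\ldots,c'_n$, and then to invoke the elementary fact that such a set is convex. No serious work is required; the entire content is the observation that the right-hand sides of the defining relations are constants.

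First I would recall that, by definition, $K(\textbf{c})$ consists of those $(c'_k)_{k\in N}\in\mathbb Q^n$ satisfying the constraints $(2)$ together with the family $(3)_k$ for all $k\in N$. Each relation in $(2)$, namely $c'_k\geq 0$ and $c'_k\leq c_k$, is manifestly a linear inequality in the variables $c'_k$, since the unprimed $c_k$ are prescribed non-negative integers. The key point for $(3)_k$ is that its right-hand side $\min\{0,\,c_{t_{i+1}}-c_{t_i}\}$ is a \emph{fixed} rational number: it depends only on the given data $\textbf{c}$ and not on the variables $(c'_k)$. Hence each $(3)_k$ is likewise a linear inequality, of the form $c'_{t_{i+1}}-c'_{t_i}\geq b_i$ with $b_i\in\mathbb Q$ constant.

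I would then note that the solution set of a single inequality $\ell(x)\geq b$, where $\ell$ is a linear functional and $b$ a constant, is a closed half-space, which is convex, and that an arbitrary intersection of convex sets is convex. Since $K(\textbf{c})$ is precisely the intersection, over the finitely many constraints $(2)$ and $(3)_k$, of the corresponding half-spaces, it follows at once that $K(\textbf{c})$ is convex; indeed it is a rational polyhedron. One could equally well argue from the system $(3)'$, which by the equivalence established in \ref{1.5} defines the same set and is again a finite family of linear inequalities.

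There is no genuine obstacle here: the only thing to check is the observation made above, that the $\min$-expressions on the right of $(3)_k$ are constants rather than functions of the coordinates. Once this is granted, convexity is immediate. If anything, the only mild care needed is to confirm that every defining condition has been read as a constraint on the $c'$ with fixed data on the right, so that linearity in the unknowns $(c'_k)_{k\in N}$ is not spoiled.
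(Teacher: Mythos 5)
Your proof is correct and is essentially the paper's own argument: the paper verifies by hand that a convex combination of points satisfying $(2)$ and $(3)$ again satisfies them, which is just the unrolled form of your observation that these are finitely many linear inequalities in the $(c'_k)$ with constant right-hand sides, so that $K(\textbf{c})$ is an intersection of half-spaces. The key point is identical in both: the terms $\min\{0,c_{t_{i+1}}-c_{t_i}\}$ (in the paper's proof, the constants $d_k=\max\{0,c_k-c_{k+1}\}$) depend only on $\textbf{c}$ and not on the variables.
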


  \begin {proof}  Let $F$ be a finite index set and suppose that the $(c_{k,f})_{k \in N}$ satisfy $(2),(3)$ for all $f \in F$.  Let $a^f:f \in F$ be positive rational numbers summing to $1$.  We must show that $c_k':=\sum_{f \in F}a^fc_{k,f}:k \in N$, satisfy $(2),(3)$.  This is obvious for $(2)$.  Set $d_k=\max \{0,c_k-c_{k+1}\}$, which is $\geq 0$. Since the $c_{k,f}$ satisfy $(3)_n$ we obtain $c_{k,f}-c_{k+1,f}\leq d_k$, for all $k \in N$. Then $c'_k-c'_{k+1} = \sum_{f\in F}a^f(c_{k,f}-c_{k+1,f})\leq \sum_{f\in F}a^fd_k=d_k$, and so the $c'_k: k \in N$ satisfy $(3)_n$. Of course the general case is similar since this just corresponds to omitting successively elements of $N$.
  \end {proof}

  \textbf{Remark}.  This is actually a slightly more general result since we do not need to know for its proof that $s_1 \prec s_2 \prec \ldots \prec s_n$.

   \subsection{}\label{3.7}

   We may now give a proof of Theorem \ref {1.4}.  Through Proposition \ref {2.3} and Lemma \ref {3.6} it follows that $K(\textbf{c})$ contains the convex hull of $Z(\textbf{c})$ and hence is equal to it by Proposition \ref {3.5}.   Finally apply Corollary \ref {3.4}.

    \section{Reconstructing a Tableau from a Function}\label{4}

    The advantage of tableaux over binary fusion is that $f_\mathscr T: \mathscr T \in H(\textbf{c})$ can be read off directly, \cite [Sect. 4]{J1} or here by Lemma \ref {2.2},
    from the tableau $\mathscr T$ whereas the corresponding function $z_v: \in V(\mathscr G(\textbf{c}))$ is only given inductively by following the path in $\mathscr G(\textbf{c})$ to the driving function and using $(1)$.

\subsection{}\label{4.1}

  The aim of this section is to give a converse to Lemma \ref {2.2}, that is to say given a function $f$ which is linear in the co-ordinates $r^i-r^{i+1}:i \in N$ to find a complete tableau $\mathscr T$ such that $f_\mathscr T=f$.  In this we shall assume that the coefficients $\{c_i\}_{i \in N}$ are indeterminates in terms of which the coefficients of $f$ are expressed as linear functions. Otherwise the uniqueness of $\mathscr T$ is not assured.  Not all linear functions can be represented in this fashion (see Example 2 of \ref {4.2}) and so we must also establish the existence of $\mathscr T$.
  %there exists a complete tableau $\mathscr T \in H(\text{c})$ such that $f_\mathscr T$, satisfies the conclusion of Lemma \ref {2.2}.

  Let us first recall some further results of \cite {J1}.

 \subsection{}\label{4.2}

 Let $\mathscr T$ be a complete tableau (with $n+1$ columns).

 Set $u= ht(\mathscr T)$.  If $u$ is odd (resp. even) then the unique strongly extremal column \cite [Def. 2.3.4]{J1} of $\mathscr T$ is the leftmost (resp. rightmost) column of $\mathscr T$ of height $u$.

 Our first task is to identify the unique strongly extremal column of the prospective tableau $\mathscr T$.  This is provided by the remarkably simple lemma below.  Retain the notations and conventions of Lemma \ref {2.2}.  Express $f_\mathscr T$ as in $(4)$.  We recall that if $\phi$ is the empty tableau, then $f_\phi$ is the driving function which here we are taking to be equal to zero.

 \begin {lemma}  Suppose that $\mathscr T$ is not the empty tableau.  Then for all $k \in [0,n]$ the unique strongly extremal column of $\mathscr T$ is $C_{k+1}$ if and only if $c^\mathscr T_{k+1}-c_k^\mathscr T=c_{k+1}$, for all $k \in [0,n]$.

\end {lemma}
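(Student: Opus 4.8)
The plan is to reduce everything, via Lemma \ref{2.2}, to a purely combinatorial statement about the height function, and then to invoke the boundary structure of complete tableaux. Throughout recall (\ref{4.1}) that the $c_i$ are treated as indeterminates, so that an equality of linear functions such as $c_{k+1}-c_j=c_{k+1}$ can hold only if the term $c_j$ is literally absent. First I would apply Lemma \ref{2.2} with $m:=ht(C_{k+1})$. Since $c^{\mathscr T}_{k+1}-c^{\mathscr T}_k$ equals $c_{k+1}-\iota^-_{C_{k+1}}c_j$ (resp. $c_{k+1}-\iota^+_{C_{k+1}}c_j$, resp. $0$) according as $m$ is odd (resp. even and positive, resp. zero), and since $c_j$ is a genuine indeterminate in cases (i),(ii), the equality $c^{\mathscr T}_{k+1}-c^{\mathscr T}_k=c_{k+1}$ holds precisely when either $m$ is odd and $\iota^-_{C_{k+1}}=0$, or $m$ is even and positive and $\iota^+_{C_{k+1}}=0$; an empty column ($m=0$) gives difference $0\neq c_{k+1}$ and is excluded. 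Here $\iota^-_{C_{k+1}}=0$ (resp. $\iota^+_{C_{k+1}}=0$) says exactly that $C_{k+1}$ is strictly taller than every column to its left (resp. right); I shall call such a column a strict left record (resp. strict right record).

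Second, the forward implication is immediate. Set $u=ht(\mathscr T)$. If $u$ is odd, the strongly extremal column is the leftmost column of height $u$; being leftmost of maximal height it is a strict left record of odd height, so $\iota^-=0$ and the combinatorial condition holds. If $u$ is even, the strongly extremal column is the rightmost column of height $u$, a strict right record of even positive height, so $\iota^+=0$ and again the condition holds. Thus the strongly extremal column always satisfies $c^{\mathscr T}_{k+1}-c^{\mathscr T}_k=c_{k+1}$.

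Third, and this is the heart of the matter, I must prove the converse: no column other than the strongly extremal one satisfies the condition. By the first step this amounts to the two structural assertions that (a) the only odd-height strict left record is the leftmost maximal column, and this occurs exactly when $u$ is odd, and (b) the only even-height strict right record is the rightmost maximal column, occurring exactly when $u$ is even. Assertion (a) is where I would spend the effort, using the boundary conditions of \cite[2.2, 2.3.2, 2.3.3]{J1}: the contribution rule recalled in \ref{2.2} forces every even row of level $m$ to admit a column of height $\geq m$ to its left, save for the extremal blocks of $C_1$, and every odd row a column of height $\geq m$ to its right, save for the extremal blocks of $C_{n+1}$. The key sub-step is to show that an odd-height strict left record $C_p$, of height $m$, must be a column of maximal height. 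Indeed, suppose some column had height $\geq m+1$, and let $C_r$ be the leftmost such; since $C_p$ and every column to its left have height $\leq m$, one has $r>p$ and in particular $C_r\neq C_1$, so the block of $C_r$ in the (even) row $m+1$ is non-extremal and by the boundary condition requires a column of height $\geq m+1$ to its left; but every column to the left of $C_r$ has height $\leq m$, a contradiction. Hence $m=u$ and $C_p$ is the leftmost maximal column; and since a strict left record of odd height $u$ exists only when $u$ is odd, (a) follows. Assertion (b) follows by the left-right dual argument, exchanging the roles of $C_1$ and $C_{n+1}$ and of even and odd rows.

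Finally, combining the three steps with the uniqueness of the strongly extremal column (\ref{4.2}) gives the stated equivalence for every $k\in[0,n]$. I expect the third paragraph, specifically the no-taller-column sub-step resting on the even-row left-support boundary condition, to be the only genuine difficulty; the remaining implications are formal consequences of Lemma \ref{2.2} and the definition of strong extremality.
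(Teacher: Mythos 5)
Your proposal is correct, and up to the reduction step it coincides with the paper's own proof: both invoke Lemma \ref{2.2} together with the standing convention (\ref{4.1}) that the $c_i$ are indeterminates to translate $c^{\mathscr T}_{k+1}-c^{\mathscr T}_k=c_{k+1}$ into the vanishing of an indicator (namely $\iota^-_{C_{k+1}}=0$ with $ht(C_{k+1})$ odd, or $\iota^+_{C_{k+1}}=0$ with $ht(C_{k+1})$ even and positive), and both then obtain the ``only if'' direction immediately from the definition of the strongly extremal column. Where you genuinely diverge is the converse, that is, in showing that such a ``record'' column must have height $u=ht(\mathscr T)$. The paper argues by contradiction from $ht(C_{k+1})<u$, citing \cite[Lemma 2.3.2]{J1} (that $ht(C_1)\geq u-1$ when $u$ is odd, resp. $ht(C_{n+1})\geq u-1$ when $u$ is even); the vanishing indicator then squeezes $C_{k+1}$ into being $C_1$ of even height $u-1$, resp. $C_{n+1}$ of odd height $u-1$, i.e. $k=0$ with $c_1^{\mathscr T}=c_1$ or $k=n$ with $c_n^{\mathscr T}=0$, and a second application of Lemma \ref{2.2} to that boundary column yields the incompatible formula $c_1^{\mathscr T}=c_1-c_j$, resp. $c_n^{\mathscr T}=c_j$. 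You instead prove maximality of height directly from the block-support boundary conditions recalled in \ref{2.2}: if some column were strictly taller than the record column, the leftmost (resp. rightmost) such column would carry a non-extremal block in an even (resp. odd) row with no column of sufficient height to its left (resp. right), which is forbidden. Your route buys two things: it bypasses \cite[Lemma 2.3.2]{J1} and the second invocation of Lemma \ref{2.2}, and it transparently establishes the parity claim --- that $ht(C_{k+1})$ has the same parity as $u$ --- which the paper's proof asserts at the outset with only a bare appeal to Lemma \ref{2.2}, where in fact it needs exactly the kind of boundary argument you supply. The trade-off is that you use the support property of the boundary conditions at an arbitrary row level, rather than only the two cited height bounds on $C_1$ and $C_{n+1}$; since that property is recalled in \ref{2.2} for complete tableaux, this is legitimate, and both arguments are sound.
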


\begin {proof}  Let $C_{k+1}: k \in [1,n]$ be the unique strongly extremal column of $\mathscr T$.  It has height $u$ which by hypothesis is $>0$.  If $u$ is odd (resp. even) then $\iota^-_{C_{k+1}}=0$ (resp. $\iota^+_{C_{k+1}}=0$).  Thus only if follows from definitions and the formulae in (i),(ii) of Lemma \ref {2.2}.

%Consider now ``only if''.

Suppose that $c^\mathscr T_{k+1}-c^\mathscr T_k=c_{k+1}$.  Again suppose that $u:=ht(\mathscr T)$ is odd (resp. even).  Then by Lemma \ref {2.2} it follows that $ht(C_{k+1})$, has odd (resp. even) height and that $\iota^-_{C_{k+1}}=0$ (resp. $\iota^-_{C_{k+1}}=0$).

It remains to show that $ht(C_{k+1})=u$. Suppose $ht(C_{k+1})<u$.   Take $u$ odd (resp. even) and recall \cite [Lemma 2.3.2]{J1} that $ht(C_1)\geq u-1$ (resp. $ht(C_{n+1}) \geq u-1$).  By our hypothesis,  equality must hold. Thus either $C_{k+1}=C_1$ and has even height, or $C_{k+1}=C_{n+1}$ and has odd height.
Recalling that $c_{n+1}^\mathscr T,c_{n+1},c_0^\mathscr T,c_0$ are all zero by definition, this means that $k=0$ and $c_1^\mathscr T=c_1$ or $k=n$ and $c_n^\mathscr T=0$.

On the other hand since $ht(C_{k+1})<u$, it follows that $j \in [k+1,n+1]$ (resp. $j \in [1,k]$) defined in (ii) (resp. (i)) of Lemma \ref {2.2} exists. Then by (ii) (resp. (i)) of Lemma \ref {2.2} we obtain $c_1^\mathscr T=c_1-c_j$ (resp. $c_n^\mathscr T=c_j$), which is a contradiction.
\end {proof}

\textbf{Example 1.} Take $n=3$ and let $\mathscr T$ be defined by the partition $(3,2,1,3)$.  Then $f_\mathscr T=c_1(r^1-r^4)+(c_2-c_3)(r^2-r^3)$, and so $c^\mathscr T_1-c^\mathscr T_0=c_1, c^\mathscr T_2-c^\mathscr T_1=c_2-c_3,c^\mathscr T_3-c^\mathscr T_2=c_3-c_2,c^\mathscr T_4-c^\mathscr T_3=-c_1$.  Then the lemma $C_1$, is the strongly extremal column and indeed this is the case.

\

\textbf{Example 2.} Take $n=2$ and $f=c_1'(r^1-r^2)+c_2'(r^2-r^3)$, with $c_1'=c_1,c_2'-c_1'=c_2$. Then a tableau $\mathscr T$ for which $f_\mathscr T=f$ would have to have both $C_1$ and $C_2$ as its unique strongly extremal column.  In other words one \textit{cannot} represent the function $f=c_1(r^1-r^2)+(c_1+c_2)(r^2-r^3)$ through a tableau.  Of course if we take $c_1+c_2$ to define a new coefficient $c_2$, then this function is represented by the tableau defined by the partition $(1,1,1)$.

 \subsection{}\label{4.3}

 The above result seems at first sight to be in conflict with the rather important \cite [Lemma 5.4]{J1}.  The latter asserts that when we add to $f_\mathscr T$ the driving function $f_\phi:=-\sum_{j=1}^nc_jm^j$ (which we had previously taken to be zero) and replace $(r^j-r^{j+1})$ by $m^j+m^{j+1}$, then the coefficient of $m^{k+1}$ equals zero when $C_{k+1}$ is the unique strongly extremal column of $\mathscr T$.  Then from
 $$f_\mathscr T= \sum_{j=1}^nc_j^\mathscr T(m^j+m^{j+1}) - c_jm^j, $$
we obtain $c^\mathscr T_{k+1}+c^\mathscr T_k-c_{k+1}=0$, whilst from Lemma \ref {4.2} we obtain $c^\mathscr T_{k+1}-c^\mathscr T_k-c_{k+1}=0$.  Together \cite [Lemma 5.4]{J1} and Lemma \ref {4.2} imply the

 \begin {cor} Take $k \in [0,n]$.  If $C_{k+1}$ is the unique strongly extremal column of $\mathscr T$ then $c^\mathscr T_k=0$.
 \end {cor}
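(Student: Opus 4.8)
The plan is to combine the two stated results directly, as the corollary is essentially a two-line consequence once both inputs are in place. The statement asserts that if $C_{k+1}$ is the unique strongly extremal column of $\mathscr T$, then $c_k^\mathscr T = 0$. The two ingredients I would invoke are Lemma \ref{4.2} and \cite[Lemma 5.4]{J1}, exactly as set up in the discussion immediately preceding the corollary.

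First I would recall that Lemma \ref{4.2} tells us that $C_{k+1}$ being the unique strongly extremal column is equivalent to $c_{k+1}^\mathscr T - c_k^\mathscr T = c_{k+1}$. Second, I would invoke \cite[Lemma 5.4]{J1} in the form already derived in \ref{4.3}: upon restoring the driving function $f_\phi = -\sum_{j=1}^n c_j m^j$ and substituting $r^j - r^{j+1} = m^j + m^{j+1}$, the coefficient of $m^{k+1}$ vanishes precisely when $C_{k+1}$ is the unique strongly extremal column, which yields the relation $c_{k+1}^\mathscr T + c_k^\mathscr T - c_{k+1} = 0$.

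The argument is then simply to subtract these two relations. From $c_{k+1}^\mathscr T - c_k^\mathscr T = c_{k+1}$ and $c_{k+1}^\mathscr T + c_k^\mathscr T = c_{k+1}$, subtraction eliminates both $c_{k+1}^\mathscr T$ and $c_{k+1}$, leaving $2 c_k^\mathscr T = 0$, hence $c_k^\mathscr T = 0$. This holds for the relevant $k \in [0,n]$, and the boundary cases $k=0$ and $k=n$ are already consistent since $c_0^\mathscr T$ and $c_{n+1}^\mathscr T$ are zero by the conventions of \ref{2.2}.

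I do not anticipate any genuine obstacle here, since both relations are handed to us. The only point requiring mild care is that \cite[Lemma 5.4]{J1} is stated in the $m^j$-coordinates with the driving function reinstated, so I would want to make sure the translation back to the $c_k^\mathscr T$ notation is carried out consistently with the substitution $r^j - r^{j+1} = m^j + m^{j+1}$ and the expansion $f_\mathscr T = \sum_{j=1}^n c_j^\mathscr T(m^j + m^{j+1}) - c_j m^j$; this is already spelled out in \ref{4.3}, so the corollary is immediate once both coefficient identities are placed side by side.
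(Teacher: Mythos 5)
Your proof is correct: subtracting the relation $c^{\mathscr T}_{k+1}+c^{\mathscr T}_k=c_{k+1}$ (from \cite[Lemma 5.4]{J1}, after reinstating the driving function and substituting $r^j-r^{j+1}=m^j+m^{j+1}$) from the relation $c^{\mathscr T}_{k+1}-c^{\mathscr T}_k=c_{k+1}$ (from Lemma \ref{4.2}) does yield $c^{\mathscr T}_k=0$, and this is exactly the derivation the paper records in the sentences immediately preceding the corollary. However, it is not the argument the paper puts inside the proof environment. There the author deliberately gives a \emph{direct} proof, independent of \cite[Lemma 5.4]{J1}: one first reduces to the case where $\mathscr T$ is complete (completion changes neither the strongly extremal column nor $f_{\mathscr T}$), then computes the coefficient of $r^k-r^{k+1}$ row by row via \cite[4.4]{J1} and well-numberedness, observing that the top row contributes zero and the remaining contributions cancel in pairs. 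The reason for this redundancy is spelled out in the Remark following the proof: the direct argument, combined with Lemma \ref{4.2}, conversely furnishes a new proof of \cite[Lemma 5.4]{J1}, which your route cannot do since it takes that lemma as input. So your proof buys brevity at the price of depending on the external result, while the paper's buys self-containment and the converse implication. Two small points to tighten in your write-up: you should, as the paper does, explicitly reduce to $\mathscr T$ complete, since Lemma \ref{4.2} (and Lemma \ref{2.2} behind it) is stated for complete tableaux; and for $k=n$ the conclusion $c^{\mathscr T}_n=0$ is not itself a convention but follows from the same subtraction using the conventions $c_{n+1}=c^{\mathscr T}_{n+1}=0$.
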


 \begin {proof}  To be reassured we give a direct proof. We can assume that $\mathscr T$ is complete, since completion does not change the strongly extremal column \cite [2.4]{J1}, nor by \cite [4.5]{J1} the associated function $f_\mathscr T$.  One has $u:=ht(\mathscr T)=ht(C_{k+1})$.  If $u$ is odd, $C_{k+1}$ is a left extremal column and so the contribution to the coefficient of $r^k-r^{k+1}$ from the row of height $u$ is zero.  On the other hand by \cite [Lemmas 2.3.3, 2.3.2] {J1} if $u$ is odd, the height of $C_k$ (resp. $C_1$) is $\geq u-2$ (resp. $\geq u-1$) and if $u$ is even, the height of $C_k$ (resp. $C_1$) $\geq u-1$ (resp. $u$).  Then using \cite [4.4]{J1} and the fact that $\mathscr T$ is well-numbered \cite [3.1]{J1}, one checks the contributions to $r^k-r^{k+1}$, from the remaining rows cancel, in pairs.  Thus $c_k^\mathscr T=0$.
 \end {proof}

 \textbf{Remark}.  Conversely this gives a further proof of \cite [Lemma 5.4]{J1}, which was an important result in the application of $S$-graphs to the required invariance property of dual Kashiwara functions.

 \

 \textbf{Example}.  Take $n=2$ and consider the tableau $\mathscr T$ defined by the partition $(2,1,2)$.  Then $C_3$ is its unique strongly extremal column, whilst $f_\mathscr T=(c_1-c_2)(r^1-r^2)$, and indeed $c^\mathscr T_2=0$.

  \subsection{}\label{4.4}

  Let $\mathscr T$ be a non-empty complete tableau of height $u$ and let $C_{k+1}$ be its unique strongly extremal column.

  A column $C_{j+1}$ of $\mathscr T$ of height $u$ is said to be quasi-extremal \cite [3.3.1]{J1} if it is not $C_{k+1}$.  By definition of the strongly extremal column, if $u$ is odd (resp. even), a quasi-extremal column $C_{j+1}$ of height $u$ must lie to the right (resp. left) of $C_{k+1}$, that is $j>k$ (resp. $j<k$).

  A tableau $\mathscr T$ does not possess a quasi-extremal column of height $u:=ht(\mathscr T)$, if and only if its strongly extremal column $C_{k+1}$ is the unique column of height $ht(\mathscr T)$.  In this case the block in row $u$ may be removed from the strongly extremal column to obtain a tableau $\mathscr T'$ and moreover $f_{\mathscr T'}=f_\mathscr T$ (as follows trivially from \cite [4.4]{J1}.  Moreover since $\mathscr T$ is assumed complete, the tableau so obtained from $\mathscr T$ is still complete.  Indeed completion adjoins dominoes by the rule described in \cite [2.3.1]{J1}).  If one cannot adjoin a domino to $\mathscr T$, then a fortiori one cannot adjoin a domino to $\mathscr T'$.  One may further remark that by the boundary conditions \cite [2.2]{J1} it follows that $C_{k+1}$ remains the strongly extremal column of $\mathscr T$.

  Thus without loss of generality we may assume that $\mathscr T$ has at least two columns of height $u$, or that $\mathscr T$ is empty.

  Let $C_{k+1}$ be the unique strongly extremal column of $\mathscr T$ and let $C_{j+1}$ be a second column of height $u$, which we can assume is a neighbour at level $u$.
 Then $j-k>0$ (resp $k-j>0$) if $u$ is odd (resp. even) and is minimal with the property that $C_{j+1}$ has height $u$. This determines $j \in [0,n]$ uniquely.

% Moreover in this case the block in row $u$ may be removed from $C_{k+1}$ to obtain a complete tableau $\mathscr T'$. Moreover the latter admits $C_{j+1}$ as its unique strongly extremal column.
%  %of $\mathscr T'$ and conversely $C_{j+1}$ is the unique neighbour of $C_{k+1}$ at level $u$.
% We claim that
%
% $$ f_\mathscr T-f_{\mathscr T'}=\left\{
%                       \begin{array}{ll}
%                         c_{k+1}(r^{k+1}-r^{j+1}), & \hbox{if $u$ is odd (noting that $k<j$),} \\
%                         c_k(r^{k+1}-r^{j+1}), & \hbox{if $u$ is even (noting that $k >j$).}
%                       \end{array}
%                     \right. \eqno {(19)}$$
%
%This follows easily from \cite [4.4]{J1} recalling that a complete tableau is well-numbered \cite [3.1]{J1}.

 Our second task is to identify the unique quasi-extremal column which is a neighbour at level $u$ of the unique strongly extremal column in the prospective tableau $\mathscr T$ of height $u$.  This is provided by the

\begin {lemma} Let $\mathscr T$ be a complete tableau of height $u>0$ with at least two columns of height $u$.  Let $C_{k+1}$ be the unique strongly extremal column of $\mathscr T$.  Then $C_{j+1}:j \in [1,n]\setminus \{k\}$ is the unique neighbour of $C_{k+1}$ at level $u$ if and only if $k< j$ and $c_{j+1}^\mathscr T-c_j^\mathscr T=c_{j+1}-c_{k+1}$ (in which case $u$ is odd) or $k>j$ and $c_{j+1}^\mathscr T-c_j^\mathscr T=c_{j+1}-c_k$ (in which case $u$ is even).
\end {lemma}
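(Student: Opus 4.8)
The plan is to reduce both implications to the coefficient formula of Lemma \ref{2.2}, but applied to the column $C_{j+1}$ \emph{itself} rather than to $C_{k+1}$, and then to pin down the height of $C_{j+1}$ using the special shape of the height function of a complete tableau. Throughout I use the standing convention of \ref{4.1} that the $\{c_i\}$ are indeterminates, so that two of the linear expressions produced by Lemma \ref{2.2} may be compared coefficient by coefficient.

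First I would dispose of the ``only if'' direction. Suppose $C_{j+1}$ is the unique neighbour of $C_{k+1}$ at level $u$. By \ref{4.2} the strongly extremal column is the leftmost (resp. rightmost) column of height $u$ when $u$ is odd (resp. even), so $C_{j+1}$ lies to the right of $C_{k+1}$ (hence $k<j$) when $u$ is odd, and to its left (hence $k>j$) when $u$ is even. Since $ht(C_{j+1})=u$, I apply Lemma \ref{2.2} to $C_{j+1}$: in the odd case part (i) gives $c_{j+1}^\mathscr T-c_j^\mathscr T=c_{j+1}-c_\ell$, where $C_\ell$ is the level-$u$ neighbour of $C_{j+1}$ on its left, which is precisely $C_{k+1}$, so $\ell=k+1$ and $c_{j+1}^\mathscr T-c_j^\mathscr T=c_{j+1}-c_{k+1}$. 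Dually, in the even case part (ii) produces the subtracted coefficient $c_k$, the even-row entry of the right-hand neighbour $C_{k+1}$, giving $c_{j+1}^\mathscr T-c_j^\mathscr T=c_{j+1}-c_k$. This is immediate once the indices of Lemma \ref{2.2} are matched up.

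For the ``if'' direction I treat the hypothesis $k<j$ and $c_{j+1}^\mathscr T-c_j^\mathscr T=c_{j+1}-c_{k+1}$; the other case is dual under \cite[2.3.8]{J1}. Set $m:=ht(C_{j+1})$. Comparing with Lemma \ref{2.2}, the vanishing case (iii) is impossible, and in the even case (ii) the subtracted coefficient would be $c_q$ with $q\geq j+1>k+1$, which cannot equal $c_{k+1}$; hence $m$ is odd and positive, $\iota^-_{C_{j+1}}=1$, and the left neighbour of $C_{j+1}$ at level $m$ must be the column whose odd-row entry is $c_{k+1}$, namely $C_{k+1}$. Thus $C_{k+1}$ and $C_{j+1}$ are neighbouring at level $m$, with $k<j$.

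It then remains to show $m=u$, and this is the heart of the matter. Since $ht(C_{k+1})=u$ we have $m\leq u$, and I would assume $m<u$ and seek a contradiction exactly as in the proof of Lemma \ref{4.2}. When $u$ is odd the extremal $C_{k+1}$ has odd height, so the height function of a complete tableau \cite[2.3.3]{J1} forces $ht(C_{k+2})\geq u-1$; as $m$ odd and $m<u$ gives $m\leq u-2<u-1$, the column $C_{k+2}$ (which is either $C_{j+1}$, when $j=k+1$, or lies strictly between $C_{k+1}$ and $C_{j+1}$) is too tall for the neighbouring relation at level $m$, a contradiction, so $m=u$. The delicate point, which I expect to be the main obstacle, is the case $u$ even: here one must exclude that an odd-height column lying to the right of the (rightmost) extremal column could neighbour it at its own level, a configuration whose parity mismatch ($m$ odd versus $u$ even) is \emph{not} ruled out by the weaker bound $ht(C_{k+2})\geq u-2$. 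I would resolve it by the same boundary bookkeeping used in Lemma \ref{4.2}, invoking \cite[2.3.3]{J1} together with \cite[Lemma 2.3.2]{J1} and the strong extremality of $C_{k+1}$ to show the special height function leaves no room for such a spurious configuration. Once $m=u$ is established in both parities, the uniqueness of the level-$u$ neighbour already recorded in \ref{4.4} finishes the proof.
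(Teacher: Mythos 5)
Your ``only if'' direction and the opening of the converse coincide with the paper's own proof: the paper likewise applies Lemma \ref{2.2} to the column $C_{j+1}$ itself (``take $k$ equal to $j$ in Lemma \ref{2.2}'') and uses the indeterminacy of the $c_i$ to exclude the vanishing case and to force $m:=ht(C_{j+1})$ to be odd with $C_{k+1}$ its left neighbour at level $m$ when $k<j$; and your treatment of the sub-case $u$ odd, via $ht(C_{k+2})\geq u-1$ from \cite[2.3.3]{J1} and the neighbouring relation, is the paper's height comparison phrased slightly differently. The genuine gap is precisely the sub-case you flag yourself: $k<j$, $m$ odd, $u$ even. The paper never meets this sub-case, because it asserts at the outset of the first case that $u$ \emph{is} odd (citing \cite[2.3.4]{J1} only for the consequence $k<n$), and dually in the second case that $u$ is even; the height comparison is then run only in the correct parity. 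You, instead, must rule the even sub-case out, and you give no argument---only the hope that ``the same boundary bookkeeping used in Lemma \ref{4.2}'' will do it. Note also that your closing sentence (``once $m=u$ is established in both parities'') cannot be the right goal: when $u$ is even and $m$ is odd, $m=u$ is impossible, so what must be proved there is that the configuration cannot occur at all, which is a statement of a different kind from the computation you carry out in the odd case.

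Moreover, the inequalities you propose to invoke cannot close this gap. Suppose $u$ is even and the strongly extremal column $C_{k+1}$ is not the last column. By \cite[Lemma 2.3.2]{J1} the last column has height $\geq u-1$, hence exactly $u-1$ since every column to the right of $C_{k+1}$ has height $<u$; taking the leftmost column of height $u-1$ strictly to the right of $C_{k+1}$ then produces exactly the configuration to be excluded: an odd-height column whose left neighbour at its own level is $C_{k+1}$, so that Lemma \ref{2.2}(i) yields the case-one identity $c_{j+1}^{\mathscr T}-c_j^{\mathscr T}=c_{j+1}-c_{k+1}$ with $u$ even. So the even sub-case can only be vacuous by virtue of a structural fact about complete tableaux (in effect, that for $u$ even the strongly extremal column must be the last column, so that no $j>k$ exists at all), and no fact of this kind appears in your write-up; it is this input, not more careful bookkeeping with \cite[2.3.3]{J1} and \cite[Lemma 2.3.2]{J1}, that the paper's one-line parity assertion encodes. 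You are in good company---the paper offers no justification for that assertion either, and there are further degenerate cases (for instance $j+1=k$ in the second case, where $c_{j+1}-c_k\equiv 0$ so that Lemma \ref{2.2}(iii) can no longer be excluded by indeterminacy) which neither you nor the paper addresses---but as a proof of the lemma as stated, your proposal is incomplete at exactly the point you identify as the main obstacle, and identifying an obstacle is not the same as overcoming it.
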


\begin {proof} By a suitable relabelling, only if follows from definitions and the formulae in (i) and (ii) of Lemma \ref {2.2}.

Consider the converse.

Take $k$ equal to $j$ in Lemma \ref {2.2}.  Then by its conclusion we can write $c^\mathscr T_{j+1}-c^\mathscr T_j=c_{j+1}-c$, where $c=0$ or is some $c_{k'}:k' \in [1,n]$.  Comparison with the hypothesis of the lemma (remembering that the $\{c_i\}_{i\in N}$ are supposed indeterminates) excludes the case $c=0$ and forces $k'=k+1$, if $k<j$ and $k'=k$ if $k>j$.  From the way in which $k'$ is obtained in applying Lemma \ref {2.2}, this implies in the first (resp. second) case that $k'\leq j$ (resp. $k'>j$) and $C_{k+1}$ is a left (resp. right) neighbour to $C_{j+1}$ at level $ht (C_{j+1})$.

In the first case above $u$ is odd and so $k<n$ (by \cite [2.3.4]{J1}) and since $\mathscr T$ is complete, $ht(C_{k+2})\geq u-1$ (by \cite [2.3.3]{J1}).  Thus $ht(C_{j+1})\geq u-1$.  If equality holds, then $ht(C_{j+1})$ is even and by Lemma \ref {2.2}(ii),(iii) we obtain a second and different formula for $c^\mathscr T_{j+1}-c^\mathscr T_j-c_{j+1}$, which is contradictory.

The proof in the second case is similar and also obtains from the first case by duality.  In more detail $u$ is even and so $k>0$ (by \cite [2.3.4]{J1}). Thus $ht(C_k)\geq u-1$ (by \cite [2.3.3]{J1}).  Thus $ht(C_{j+1})\geq u-1$.  If equality holds,  then $ht(C_{j+1})$ is odd and by Lemma \ref {2.2}(i) we obtain a second and different formula for $c^\mathscr T_{j+1}-c^\mathscr T_j-c_{j+1}$, which is contradictory.

 %Set $z_v=f_\mathscr T, z_{v'}=f_{\mathscr T'}$.

%Suppose $k<j$ and $c_{j+1}^\mathscr T-c_j^\mathscr T=c_{j+1}-c_{k+1}$.  By $(19)$ it follows that
%
%$$ (c^\mathscr T_{\ell+1}-c^\mathscr T_\ell)-(c^{\mathscr T'}_{\ell+1}-c^{\mathscr T'}_\ell)=\left\{
%                       \begin{array}{ll}
%                        -c_{k+1}, & \hbox{if $\ell=j$,} \\
%                        c_{k+1}, & \hbox{if $\ell=k$,}\\
%                        0,& \hbox{otherwise}.
%                       \end{array} \right. $$
%
%Consequently $c^\mathscr T_{j+1}-c^\mathscr T_j=c_{j+1}$, which by Lemma \ref {4.2} means that $C_{j+1}$ is the unique strongly extremal column of $\mathscr T'$ and hence that $C_{j+1}$ is the unique neighbour of $C_{k+1}$ at level $u$.
%
%Suppose $j<k$ and $c_{j+1}^\mathscr T-c_j^\mathscr T=c_{j+1}-c_{k}$.  By $(19)$ it follows that
%
%$$ (c^\mathscr T_{\ell+1}-c^\mathscr T_\ell)-(c^{\mathscr T'}_{\ell+1}-c^{\mathscr T'}_\ell)=\left\{
%                       \begin{array}{ll}
%                        -c_{k}, & \hbox{if $\ell=k$,} \\
%                        c_{k}, & \hbox{if $\ell=j$,}\\
%                        0,& \hbox{otherwise}.
%                       \end{array} \right. $$
%
%Consequently $c^\mathscr T_{j+1}-c^\mathscr T_j=c_{j+1}$, which by Lemma \ref {4.2} means that $C_{j+1}$ is the unique strongly extremal column of $\mathscr T'$ and hence that $C_{j+1}$ is the unique neighbour of $C_{k+1}$ at level $u$.
%

\end {proof}

\subsection{}\label{4.5}

Take a function $f$ of the form
$$f=\sum_{j=1}^nc_j'(r^j-r^{j+1}),$$
where the $c_j': j \in N$ are linear combinations of the $\{c_i\}_{i\in N}$ viewed as indeterminates. We can assume that $f$ is non-zero, otherwise $f=f_\phi$, where $\phi$ is the empty tableau.

Suppose that there is a complete tableau $\mathscr T \in H^{n+1}$ with $f_\mathscr T=f$.  Then by Lemma \ref {4.2}, there must be a unique $k \in [0,n]$ such that $c_{k+1}'-c_k'=c_{k+1}$. This determines the the unique strongly extremal column of $\mathscr T$ to be $C_{k+1}$.  Then (as noted in \ref {4.4}) we can assume that there is a second column of $\mathscr T$ with the same height $u$ as $C_{k+1}$ and hence a unique column $C_{j+1}:j \in [1,n]\setminus \{k\}$ of height $ht(\mathscr T)$ which is a neighbour to $C_{k+1}$ (on the appropriate side) at level $u$.

Then by Lemma \ref {4.4},  there must be a unique $j \in [1,n]\setminus \{k\}$ such that $c_{j+1}'-c_j'$ satisfies the conditions given in its conclusion.  This determines $j\in [1,n]\setminus \{k\}$ defined in the previous paragraph. Then as noted in \ref {4.4} one may remove the block at level $u$ in $C_{k+1}$ to obtain a new complete tableau $\mathscr T'$ with $C_{j+1}$ as its unique strongly extremal column. The latter admits $C_{j+1}$ as its unique strongly extremal column.
  %of $\mathscr T'$ and conversely $C_{j+1}$ is the unique neighbour of $C_{k+1}$ at level $u$.

 We claim that

 $$ f_\mathscr T-f_{\mathscr T'}=\left\{
                       \begin{array}{ll}
                         c_{k+1}(r^{k+1}-r^{j+1}), & \hbox{if $u$ is odd (noting that $k<j$),} \\
                         c_k(r^{k+1}-r^{j+1}), & \hbox{if $u$ is even (noting that $k >j$).}
                       \end{array}
                     \right. \eqno {(19)}$$

This follows easily from \cite [4.4]{J1} recalling that a complete tableau is well-numbered \cite [3.1]{J1}.

Suppose $k<j$ and $c_{j+1}^\mathscr T-c_j^\mathscr T=c_{j+1}-c_{k+1}$.  By $(19)$ it follows that

$$ (c^\mathscr T_{\ell+1}-c^\mathscr T_\ell)-(c^{\mathscr T'}_{\ell+1}-c^{\mathscr T'}_\ell)=\left\{
                       \begin{array}{ll}
                        -c_{k+1}, & \hbox{if $\ell=j$,} \\
                        c_{k+1}, & \hbox{if $\ell=k$,}\\
                        0,& \hbox{otherwise}.
                       \end{array} \right. $$

Consequently $c^{\mathscr T'}_{\ell+1}-c^{\mathscr T'}_\ell=c_{\ell+1}$, exactly when $\ell =j$.  By Lemma \ref {4.2} this is compatible with $C_{j+1}$ being the unique strongly extremal column of $\mathscr T'$.
% and hence that $C_{j+1}$ is the unique neighbour of $C_{k+1}$ at level $u$.

Suppose $j<k$ and $c_{j+1}^\mathscr T-c_j^\mathscr T=c_{j+1}-c_{k}$.  By $(19)$ it follows that

$$ (c^\mathscr T_{\ell+1}-c^\mathscr T_\ell)-(c^{\mathscr T'}_{\ell+1}-c^{\mathscr T'}_\ell)=\left\{
                       \begin{array}{ll}
                        -c_{k}, & \hbox{if $\ell=k$,} \\
                        c_{k}, & \hbox{if $\ell=j$,}\\
                        0,& \hbox{otherwise}.
                       \end{array} \right. $$

Consequently $c^{\mathscr T'}_{\ell+1}-c^{\mathscr T'}_\ell=c_{j\ell+1}$, exactly when $\ell=j$.  By Lemma \ref {4.2} this is compatible with  $C_{j+1}$ being the unique strongly extremal column of $\mathscr T'$.
%and hence that $C_{j+1}$ is the unique neighbour of $C_{k+1}$ at level $u$.

%We conclude that the condition imposed by Lemma \ref {4.2} on $\mathscr T'$ is automatically satisfied.   (It is equivalent to the condition imposed on $\mathscr T$ by Lemma \ref {4.4}.)
We thus obtain a new function $f'$ with coefficients determined from those of the original function $f$ with $f-f'$ given by the right hand side of $(19)$. The previous computation shows that the condition imposed by Lemma \ref {4.2} on $f'$ is automatically satisfied.   (Given that $f'$ is defined, it is equivalent to the condition imposed on $f$ by Lemma \ref {4.4}.)

Finally we continue inductively, that is to say by imposing the condition on $f'$ implied by Lemma \ref {4.4} and so on.  Eventually the empty tableau is reached which must correspond to the zero function.

It is clear that this gives a sequence of conditions on $f$ which \textit{if satisfied}, determines a complete tableau $\mathscr T$ such that $f_\mathscr T=f$.   This is what we wished to exhibit.

\end{document}